\let\BFseries\bfseries\def\bfseries{\BFseries\mathversion{bold}}
\newtheorem{thm}{Theorem}[section]
\newtheorem{lemma}[thm]{Lemma}
\newtheorem{prop}[thm]{Proposition}
\newtheorem{cor}[thm]{Corollary}
\newtheorem{rem}[thm]{Remark}
\def\be#1\ee{\begin{equation}#1\end{equation}}
\newcommand{\bea}{\begin{eqnarray}}
\newcommand{\eea}{\end{eqnarray}}
\newcommand{\beaa}{\begin{eqnarray*}}
\newcommand{\eeaa}{\end{eqnarray*}}
\newcommand{\bei}{\begin{itemize}}
\newcommand{\eei}{\end{itemize}}
\newcommand{\bee}{\begin{enumerate}}
\newcommand{\eee}{\end{enumerate}}
\def\A{\mathcal{A}}
\def\C{\mathbb{C}}
\def\D{{\Delta}}
\newcommand{\eps}{\varepsilon}
\def\unit{{{\mathbf 1}}}
\def\H{\mathcal{H}}
\def\ph{{\varphi}}
\def\s{\sigma}
\def\xn{X_\nu}
\def\dxn{\tilde X_\nu}
\def\P{{\mathbb{P}}}
\def\R{\mathbb{R}}
\def\E{\mathbb{E}}
\def\Z{{\mathbb Z}}
\newcommand{\Imag}{\operatorname*{Im}}
\newenvironment{proof}[1][] {\noindent {\bf Proof#1:} }{\hspace*{\fill}$\square$\medskip\par}
\begin{document}
\title{Small Deviations of Smooth Stationary\\ Gaussian Processes}

\author{F. Aurzada, I.A. Ibragimov, M.A. Lifshits, and J.H. van Zanten
   } \date{\today}  \maketitle
\bigskip

\begin{abstract}
\noindent We investigate the small deviation probabilities  of a
class of very smooth stationary Gaussian processes playing an
important role in Bayesian statistical inference. Our calculations
are based on the appropriate modification of the entropy method
due to Kuelbs, Li, and Linde as well as on classical results about
the entropy of classes of analytic functions. They also involve
Tsirelson's upper bound for small deviations and shed some light
on the limits of sharpness for that estimate.
\end{abstract}

\section{Introduction}
\setcounter{equation}{0}

Let $X(t)$ be a centered stationary Gaussian process identified by its spectral
measure $F(du)$. We restrict $X$ on the interval $[0,1]$ and evaluate its small
deviations with respect to the uniform norm $||\cdot||_\infty$
in terms of the small deviation function
\[
     \ph(X, r)=-\log\P(||X||_\infty\leq r), \qquad r\to 0.
\]
See \cite{LiSha4}, \cite{Lif1} for many motivations for the study of small deviations and
\cite{Lif2} for a complete bibliography on this subject.

In this note, we will be interested in the case of rather smooth processes. Namely, consider the family
of processes $\xn$ corresponding to absolutely continuous spectral measures
\[
            F_\nu(du)= \exp\{-|u|^\nu\}du,\qquad 0<\nu<\infty,
\]
and a parallel family of periodic processes $\dxn$ corresponding to discrete spectral measures
 \[
           \tilde F_{\nu}(du)=\sum_{k=-\infty}^\infty   \exp\{-|k|^\nu\}\delta_{2\pi k}
           ,\qquad 0<\nu<\infty.
 \]
The most interesting cases are $\nu=1$ (exponential spectrum) and $\nu=2$ (normal spectrum).

For exposition completeness, let us close the first family with
\[
 F_\infty(du)= \unit_{[-1,1]} du.
\]

Although the smoothness properties of $\xn$ and $\dxn$ are the same, it turns out,
quite surprisingly, that their small deviations behave differently.

An important motivation for this research comes from the recent work of A.W.\ van der Vaart and
J.H.\ van Zanten \cite{VZ}, where such small deviations
were considered in the context of Bayesian statistics. It was shown that they
actually determine posterior convergence rates in nonparametric estimation problems.
In particular the process $X_2$, which is known in the Bayesian and machine learning literature as the
``squared exponential process'', is a popular building block in the construction of
 prior distributions on functional parameters, cf.\ e.g.\ \cite{Ras}.

Before we state the results, let us fix some notation. We write $f(\cdot)\preceq g(\cdot)$ or
$g(\cdot)\succeq f(\cdot)$ if $\limsup \frac {f}{g} <\infty$, while the equivalence $f\approx g$ means that
we have both $f\preceq g$ and $g\preceq f$. Moreover, $f(\cdot)\lesssim g(\cdot)$ or
$g(\cdot)\gtrsim f(\cdot)$ mean that $\limsup \frac {f}{g} \leq 1$. Finally, the strong 
equivalence $f\sim g$ means that $\lim \frac {f}{g}=1$.
\medskip

It was shown in \cite{VZ}, by using the RKHS-entropy method, that
\begin{equation}\label{vzbound}
    \ph(\xn, r) \preceq |\log r|^2, \qquad   \nu\geq 1.
\end{equation}
We will slightly improve this and obtain sharp bounds. Our main results are as follows.

\begin{thm} \label{t1} We have
\be \label{phxn}
\ph(\xn, r) \approx \frac {|\log r|^2}{\log|\log r|}, \qquad   1<\nu\le \infty,
\ee
and
\be \label{phxn1}
\ph(\xn, r) \approx |\log r|^{1+ \frac 1 {\nu}}, \qquad  0<\nu \leq 1,
\ee
as $r\to 0$.
\end{thm}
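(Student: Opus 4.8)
The plan is to transfer the problem to the reproducing kernel Hilbert space $\HH_\nu$ of $\xn$, estimate the metric entropy of its unit ball $\K_\nu$ in the sup-norm on $[0,1]$, and then apply the Kuelbs--Li--Linde link between entropy and small deviations (in a form adapted to slowly varying entropy) for the upper bound on $\ph(\xn,r)$ and Tsirelson's small-ball inequality for the lower bound. The starting point is the Fourier description
\[
    \K_\nu=\Big\{\,f\big|_{[0,1]}\ :\ \int_{\R}|\hat f(u)|^2\,\ex^{|u|^\nu}\,\d u\le 1\,\Big\},
\]
from which the regularity dichotomy at $\nu=1$ is visible: for $\nu\ge 1$ the elements of $\K_\nu$ are restrictions of extremely regular functions (analytic in a fixed strip for $\nu=1$, entire of finite order for $1<\nu<\infty$, band-limited for $\nu=\infty$), while for $\nu<1$ they form a Gevrey-type class.

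The core step is the entropy estimate: as $r\to 0$,
\[
    \log N\big(\K_\nu,\norm{\cdot}_{\infty},r\big)\approx\frac{|\log r|^2}{\log|\log r|}\ \ (1<\nu\le\infty),
    \qquad
    \log N\big(\K_\nu,\norm{\cdot}_{\infty},r\big)\approx|\log r|^{\,1+1/\nu}\ \ (0<\nu\le 1).
\]
For $\nu=\infty$ this is the $r$-entropy of a Paley--Wiener ball on an interval, which I would get from the super-exponential decay $\log(1/\lambda_j)\sim 2j\log j$ of the prolate spheroidal eigenvalues (equivalently, from the classical Kolmogorov--Tikhomirov / Vitushkin / Babenko estimates for classes of entire functions of exponential type). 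The finite-$\nu$ cases then follow by Fourier truncation: on $\K_\nu$ one has $\int_{|u|>R}|\hat f(u)|^2\d u\le\ex^{-R^\nu}$, so each $f\in\K_\nu$ lies within $\ex^{-cR^\nu}$ in sup-norm of the Paley--Wiener ball band-limited to $[-R,R]$, whose $r$-entropy on $[0,1]$ is $\asymp R|\log r|+|\log r|^2/\log|\log r|$; taking $R\asymp|\log r|^{1/\nu}$ and sandwiching $\K_\nu$ between suitably rescaled band-limited balls from both sides produces the two displayed orders. (At $\nu=1$ one may alternatively quote directly the $|\log r|^2$-entropy of functions analytic and bounded in a fixed strip; the jump by the factor $\log|\log r|$ at $\nu=1$ simply records the passage from exponential to super-exponential eigenvalue decay.)

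It then remains to convert the entropy bounds into small-deviation bounds. For the lower bound on $\ph(\xn,r)$ I would invoke Tsirelson's upper bound for $\P(\norm{\xn}_{\infty}\le r)$ (for which the packing-plus-Cameron--Martin argument, yielding $\ph(\xn,r)\succeq\log N(\K_\nu,\norm{\cdot}_{\infty},r)$, is an elementary substitute); fed with the entropy --- equivalently eigenvalue --- lower bounds this already gives the asserted orders, and comparison with the matching upper bound is what illuminates the limits of sharpness of Tsirelson's estimate. For the upper bound on $\ph(\xn,r)$ I would run the Kuelbs--Li--Linde entropy method in its slowly-varying modification; concretely, using the Karhunen--Loève expansion $\xn=\sum_j\sqrt{\lambda_j}\,\xi_j e_j$, polynomial bounds on $\norm{e_j}_{\infty}$, and a summable allocation $\sum_j a_j=1$ of the radius $r$ among the coordinates, one obtains $\ph(\xn,r)\preceq\sum_j\log^+\!\big(\sqrt{\lambda_j}/r\big)+O\big(\#\{j:\lambda_j\ge r^2\}\big)$, i.e. $\ph(\xn,r)\preceq\log N(\K_\nu,\norm{\cdot}_{\infty},r)$ up to lower-order terms, matching the lower bound.

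The main obstacle is the slowly varying factor. Both the entropy estimate and the entropy-to-probability conversion have to be carried out precisely enough to produce exactly the $\log|\log r|$ in the denominator (respectively the exponent $1+1/\nu$), rather than just an order up to $(\log|\log r|)^{\pm1}$. On the geometric side this forces the use of the sharp prolate-eigenvalue asymptotics $\log(1/\lambda_j)\sim 2j\log j$ in place of a crude super-exponential bound, and on the probabilistic side it means the off-the-shelf Kuelbs--Li--Linde link --- which leaks a $\log|\log r|$ factor in this degenerate regime --- must be replaced by the hands-on Karhunen--Loève argument; making the two ends meet is the crux. A secondary point to check is that the truncation reduction is genuinely lossless for $1<\nu<\infty$, so that the order of the entire functions involved is swallowed by the implied constants and the answer truly does not depend on $\nu$ in that range.
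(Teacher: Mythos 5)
Your high-level skeleton (entropy of the RKHS unit ball $\to$ small deviations) is the one the paper uses, but the two technically decisive steps are handled quite differently, and your proposal has real gaps in both.

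\textbf{Entropy estimate.} For $\nu>1$ the paper gets the upper bound directly by showing every $h\in\H_1$ is entire with $|h(z)|\le C_1\exp\{C_2|\Imag z|^{\nu/(\nu-1)}\}$ and quoting Kolmogorov--Tikhomirov's Theorem XX for such classes; for the \emph{lower} bound it constructs an explicit embedding $\Psi_K\cdot G\subset\H_1$, where $\Psi_K$ is the KT class $|\psi(z)|\le K\exp|z|^{1/2}$ and $G(z)=\prod_k\sin(a_kz)/(a_kz)$ is an auxiliary entire function with $|G(z)|\le e^{|z|}$, $|G(t)|\le\exp(-C_G|t|^{1/(1+\gamma)})$ on $\R$, and no zeros in $[0,1]$ --- the point being that $\Psi_K\cdot G\subset L^2(\R)$ is Paley--Wiener band-limited to $[-1,1]$, hence lies in the unit ball once $K$ is small. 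Your alternative --- sandwiching $\K_\nu$ by rescaled band-limited balls and reading the entropy off prolate eigenvalue asymptotics --- requires an explicit, uniform-in-$R$ two-sided entropy estimate for Paley--Wiener balls $\mathrm{PW}_R$ restricted to $[0,1]$ with $R\to\infty$ coupled to $\eps\to 0$. Such an estimate is not stated in KT, and you need it with enough precision to see exactly the $\log|\log r|$ denominator; you have not supplied it, and passing from $L_2$ (eigenvalues) to $L_\infty$ (which the paper works in throughout) additionally requires sup-norm control of the eigenfunctions that you assert (``polynomial bounds on $\norm{e_j}_\infty$'') but do not justify. Also note that for $\nu\le 1$ the paper never proves an entropy lower bound at all; the lower bound on $\ph$ there is Tsirelson's argument applied directly (minorate the spectral density by $e^{-l^\nu}\unit_{|u|\le l}$, use Anderson's inequality and independence of the process on a Nyquist grid). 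Your entropy-sandwich route would need the missing lemma again.

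\textbf{Entropy-to-probability conversion.} Your rationale for replacing the entropy method by a hands-on Karhunen--Lo\`eve computation --- that ``the off-the-shelf Kuelbs--Li--Linde link leaks a $\log|\log r|$ factor in this degenerate regime'' --- is exactly what Section~2 of the paper is designed to refute. Corollaries~\ref{cor:l1}--\ref{cor_ph_eq_H} show that the original Kuelbs--Li inequalities, with $\lambda=2$ in one direction and $\lambda=\sqrt{2\ph(r)}$ in the other, preserve rates of the form $|\log r|^\gamma(\log|\log r|)^\beta$ at the level of $\approx$ (and even $\lesssim$/$\gtrsim$) with no loss of slowly varying factors. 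So the ``crux'' you identify is actually already dissolved by the standard entropy method once one writes out the two implicit steps carefully; your proposed KL-expansion workaround is both unnecessary and incomplete (again, the eigenfunction bounds are missing).

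In short: the architecture is right and your lower bound mechanism $\ph\succeq H$ is the same as the paper's, but (i) the entropy lower bound for $\nu>1$ needs a replacement for the $G$-construction, (ii) the uniform-in-$R$ Paley--Wiener entropy estimate and the eigenfunction $L_\infty$ bounds are unproved, and (iii) the decision to discard the entropy-to-small-deviation transfer rests on a claim the paper shows to be false.
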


For the periodic processes the asymptotics is somewhat different.

\begin{thm} \label{t2} We have
\be \label{phdxn}
\ph(\dxn, r) \approx |\log r|^{1+ \frac 1 {\nu}}, \qquad   \nu >0,
\ee
as $r\to 0$.
\end{thm}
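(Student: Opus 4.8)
The plan is to work directly with the random Fourier series of the periodic process, where the discrete spectrum makes everything explicit. Since $\dxn$ is a centred stationary process with the symmetric discrete spectral measure $\tilde F_\nu$, it admits the representation
\[
\dxn(t) = g_0 + \sqrt2 \sum_{k=1}^\infty \ex^{-k^\nu/2}\bigl(g_k\cos(2\pi k t)+h_k\sin(2\pi k t)\bigr),\qquad t\in[0,1],
\]
where $g_0,g_1,h_1,g_2,h_2,\dots$ are i.i.d.\ standard normal, the series converging uniformly a.s.\ because $\sum_k \ex^{-k^\nu/2}<\infty$, so that $\norm{\dxn}_\infty$ is well defined and, $[0,1]$ being a full period, $\norm{\dxn}_{L^2[0,1]}$ is governed by Parseval's identity. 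In contrast with Theorem~\ref{t1}, neither the entropy method nor entropy estimates for classes of analytic functions are needed here; the two bounds for $\ph(\dxn,r)$ follow by elementary computation, and since only orders of magnitude matter, constants are handled loosely.

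\emph{Lower bound} $\ph(\dxn,r)\succeq|\log r|^{1+1/\nu}$. One passes to the $L^2$-norm: $\norm{\dxn}_\infty\ge\norm{\dxn}_{L^2[0,1]}$, and by Parseval's identity $\norm{\dxn}_{L^2[0,1]}^2=g_0^2+\sum_{k\ge1}\ex^{-k^\nu}(g_k^2+h_k^2)$ is a sum of independent nonnegative random variables. Hence the event $\{\norm{\dxn}_{L^2}\le r\}$ forces every summand to be at most $r^2$, and by independence
\[
\pr{\norm{\dxn}_{L^2[0,1]}\le r}\le\prod_{k=1}^{k_1}\pr{g_k^2+h_k^2\le r^2\ex^{k^\nu}}=\prod_{k=1}^{k_1}\bigl(1-\ex^{-r^2\ex^{k^\nu}/2}\bigr),
\]
keeping only the factors with $k\le k_1:=\lfloor|\log r|^{1/\nu}\rfloor$ (the rest being $\le1$). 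For such $k$ one has $r^2\ex^{k^\nu}\le r$, so the $k$-th factor is at most $r^2\ex^{k^\nu}/2$, whose negative logarithm equals $2|\log r|-k^\nu+\log2\ge|\log r|$ for all small $r$. Summing over the $k_1\approx|\log r|^{1/\nu}$ indices gives $\ph(\dxn,r)\ge-\log\pr{\norm{\dxn}_{L^2}\le r}\succeq|\log r|\cdot|\log r|^{1/\nu}=|\log r|^{1+1/\nu}$.

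\emph{Upper bound} $\ph(\dxn,r)\preceq|\log r|^{1+1/\nu}$. Fix a large constant $c_0$ with $c_0^\nu\ge16$ and split $\dxn=S_K+R_K$ at frequency $K=K(r):=\lceil c_0|\log r|^{1/\nu}\rceil$, with $S_K$ the partial sum. Since $\E\norm{R_K}_\infty$ is at most a constant times $\sum_{k>K}\ex^{-k^\nu/2}\preceq\ex^{-K^\nu/4}$, which is $\ll r$, Markov's inequality gives $\pr{\norm{R_K}_\infty\le r/2}\ge\tfrac12$ for small $r$. For $S_K$, if $|g_0|\le r/4$ and $\sqrt2\,\ex^{-k^\nu/2}\max(|g_k|,|h_k|)\le r/(8K)$ for all $1\le k\le K$, then $\norm{S_K}_\infty\le r/2$; so, by independence of the coefficients,
\[
\pr{\norm{S_K}_\infty\le r/2}\ge\pr{|g_0|\le r/4}\prod_{k=1}^{K}\pr{|g_k|\le\tfrac{r}{8\sqrt2\,K}\ex^{k^\nu/2}}^2 .
\]
Using $\pr{|g|\le x}\ge c\min(1,x)$ for a universal $c>0$ and taking logarithms, the only factors below $1$ are those with $k^\nu$ at most a constant times $|\log r|$, hence at most a constant times $|\log r|^{1/\nu}$ indices, and for each the logarithm is at least $-(|\log r|+\log(CK))\ge-2|\log r|$ for small $r$. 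Thus $-\log\pr{\norm{S_K}_\infty\le r/2}\preceq|\log r|^{1+1/\nu}$, and combining with the tail estimate via independence of $S_K$ and $R_K$ yields $\pr{\norm{\dxn}_\infty\le r}\ge\tfrac12\ex^{-C|\log r|^{1+1/\nu}}$, i.e.\ $\ph(\dxn,r)\preceq|\log r|^{1+1/\nu}$.

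No step is genuinely hard: the only point requiring attention is the bookkeeping of the truncation level $K\asymp|\log r|^{1/\nu}$ and of the various constants, together with --- on the ostensibly harder side $\ph\succeq|\log r|^{1+1/\nu}$ --- the observation that replacing $\norm{\cdot}_\infty$ by $\norm{\cdot}_{L^2}$ reduces the bound to a trivial computation with independent $\chi^2$ variables. Pinning down the sharp constant in \eqref{phdxn} (which the relation $\approx$ does not require) would instead call for genuinely two-sided small-ball estimates for the weighted quadratic form $\sum_k\ex^{-k^\nu}(g_k^2+h_k^2)$ and for the trigonometric partial sum, and is considerably more delicate.
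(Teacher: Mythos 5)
Your argument is correct, and it takes a genuinely different route from the paper. The paper establishes the upper bound $\ph(\dxn,r)\preceq|\log r|^{1+1/\nu}$ via the entropy machinery: it represents the RKHS as a class of periodic entire functions of exponential type, invokes Kolmogorov--Tikhomirov's entropy bound for that class (Proposition~\ref{entr_discr}), and then transfers to small deviations via the Kuelbs--Li inequalities (Corollary~\ref{cor:l2}). For the matching lower bound on $\ph$ the paper uses Tsirelson's trick: minorate the spectral measure by a flat measure supported on $\{|k|\leq l\}$, apply Anderson's inequality, observe that the resulting process is independent at a grid of step $2\pi/(2l+1)$, and optimize over $l$. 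You instead work directly with the explicit real Fourier series of $\dxn$ and avoid both the entropy apparatus and the Anderson/minorization step. Your lower bound on $\ph$ passes to $\norm{\cdot}_{L^2[0,1]}$, uses Parseval to turn the problem into a small-ball estimate for a weighted sum of independent $\chi^2_2$'s, and sums the logarithms of the first $\asymp|\log r|^{1/\nu}$ factors; your upper bound on $\ph$ truncates the series at $K\asymp|\log r|^{1/\nu}$, controls the tail via $\E\norm{R_K}_\infty\ll r$, and forces each of the $O(|\log r|^{1/\nu})$ low-frequency coefficients into a window of probability $\geq c\,r e^{k^\nu/2}/K$, all estimates being elementary and using only independence. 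Both routes give the same rate; the trade-off is that your computation is fully self-contained and shorter, while the paper's entropy approach is part of a unified framework that simultaneously handles the continuous-spectrum Theorem~\ref{t1} (where there is no periodic Fourier series and your $L^2$/Parseval reduction does not apply), and where the Kolmogorov--Tikhomirov entropy bounds are the real engine. One small remark: your $L^2$ lower bound and the Tsirelson argument both ultimately rest on the independence of the Fourier coefficients (frequency domain for you, grid points in the time domain for Tsirelson), so they are closer in spirit than they look.
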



\begin{rem} {\rm
The exponential discrete spectrum ($\nu=1$) is well understood for $L_2$-norms where the estimate
\[
    \ph(\dxn, r) \sim C |\log r|^2, \qquad\nu=1,
\]
(and even more precise behavior) is obtained  in the context of small deviations
of the series (with exponentially decreasing coefficients),
see \cite{DLL}, \cite{BorRu}, or \cite{Aur}.   As usual (but not always), the small deviation rate is the same
for the uniform and for the $L_2$-norm.
}\end{rem}

\begin{rem}{\rm
The radical difference of the two bounds (\ref{phxn}) and (\ref{phdxn}) is that the
first one does not depend on $\nu$ while the second one does. From this point of view, Theorem \ref{t1}
provides a more surprising result than Theorem \ref{t2}.

The authors were informed by A.I. Nazarov that the same phenomenon is well known for many years in the theory
of integral operators. Generally speaking, smoother the kernel of a symmetric integral operator is,
faster the eigenvalues decrease. However, there is a kind of barrier: the eigenvalues $\lambda_k$
can not decrease faster than $\log \lambda_k \approx - n\log n$. Since behavior of the eigenvalues
is tightly related to small deviations (once we consider the covariance operator of a Gaussian process)
in $L_2$-norm, the bound for eigenvalues transforms in a bound for small deviations.   
}\end{rem}

\begin{rem}{\rm    Notice that we do not have any general tools
for tracing connections between small deviations for discrete and continuous spectra.
The general feeling is that discrete spectrum provides larger small deviation probabilities.
}\end{rem}

In view of the applications in Bayesian nonparametrics we also provide upper
bounds for the small deviations of rescaled versions of the processes $X_\nu$.
For a constant $c \leq 1$, define the rescaled process $X_\nu^c$ by setting
$X^c_\nu(t) = X_\nu(t/c)$.

\begin{thm} \label{scaling} For all $c \leq 1$ we have
\be\label{eq:s1}
\ph(\xn^c, r) \preceq \frac1c\frac {|\log r|^2}{\log|\log r|}, \qquad   \nu>1,
\ee
\be\label{eq:s2}
\ph(\xn^c, r) \preceq \frac1c |\log r|^{1+ \frac 1 {\nu}}, \qquad  0<\nu \leq 1,
\ee
as $r\to 0$.
\end{thm}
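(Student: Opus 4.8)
The plan is to reduce the statement, via the obvious change of variables, to an estimate for the small deviations of $\xn$ over a long interval, and then to re-run the entropy argument underlying Theorem~\ref{t1} while keeping track of the dependence on the length of that interval. Since $\xn^c(t)=\xn(t/c)$, we have $\norm{\xn^c}_\infty=\sup_{t\in[0,1]}\abs{\xn(t/c)}=\sup_{0\le s\le 1/c}\abs{\xn(s)}$, so that $\ph(\xn^c,r)=\ph_T(\xn,r)$ with $T=1/c\ge1$, where $\ph_T(\xn,r):=-\log\pr{\sup_{0\le s\le T}\abs{\xn(s)}\le r}$. It therefore suffices to prove that
\[
   \ph_T(\xn,r)\ \preceq\ \lceil T\rceil\,\psi_\nu(r),
\]
where $\psi_\nu(r)=|\log r|^2/\log|\log r|$ for $\nu>1$ and $\psi_\nu(r)=|\log r|^{1+1/\nu}$ for $0<\nu\le1$, since then $\lceil 1/c\rceil\le 2/c$ for $c\le1$ yields (\ref{eq:s1}) and (\ref{eq:s2}).

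The key step is that the metric entropy of the unit ball of the reproducing kernel Hilbert space (RKHS) is subadditive in the length of the interval. Let $\K_T$ denote the unit ball of the RKHS of $\xn$ regarded as a process on $[0,T]$, and put $H_T(\eps)=\log N\bigl(\K_T,\norm{\cdot}_{\infty,[0,T]},\eps\bigr)$ for its metric entropy in the uniform norm. Cutting $[0,\lceil T\rceil]\supseteq[0,T]$ into the unit intervals $I_j=[j-1,j]$, $j=1,\dots,\lceil T\rceil$, one has for every $\eps>0$
\[
   H_T(\eps)\ \le\ \sum_{j=1}^{\lceil T\rceil}\log N\bigl(\K_T|_{I_j},\,\norm{\cdot}_{\infty,I_j},\,\eps\bigr)\ \le\ \lceil T\rceil\,H_1(\eps).
\]
The first inequality is the standard product-of-covers bound: picking an $\eps$-net of each restricted class and concatenating representatives (the concatenation need not be smooth, only $\eps$-close on each piece) produces an $\eps$-net of $\K_T$ in the sup-norm. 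The second uses Aronszajn's restriction theorem, by which $\K_T|_{I_j}$ lies inside the unit RKHS ball over $I_j$ (restricting a function cannot increase its RKHS norm), together with stationarity of $\xn$, which makes that ball an isometric copy of $\K_1$ in $\norm{\cdot}_{\infty,[0,1]}$.

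It remains to convert this into a small-deviation bound. The Kuelbs--Li argument used for the upper bounds in Theorem~\ref{t1} --- Anderson's inequality combined with Borell's isoperimetric inequality --- turns a bound $H_T(\eps)\le\Psi(\eps)$, for $\Psi$ of the slowly-varying type occurring in Theorem~\ref{t1}, into $\ph_T(\xn,r)\preceq\Psi(r)$; the iteration involved shifts the argument of $\Psi$ by a factor that is only polylogarithmically small, hence perturbs $|\log r|$ by a term of order $\log|\log r|$ and does not affect the leading behaviour. Feeding in $H_T(\eps)\le\lceil T\rceil\,H_1(\eps)\preceq\lceil T\rceil\,\psi_\nu(\eps)$ --- the second inequality being exactly the classical entropy estimate for the pertinent class of analytic (respectively Gevrey) functions that enters the proof of Theorem~\ref{t1} --- we get $\ph_T(\xn,r)\preceq\lceil T\rceil\,\psi_\nu(r)$, and putting $T=1/c$ completes the argument. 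The one delicate point is that the final $\preceq$ must hold with a constant \emph{uniform in} $c\le1$: this works because each $c$-dependent ingredient --- the factor $\lceil T\rceil$, the constant implicit in the entropy estimate, and the polylogarithmic shift in the Kuelbs--Li step --- contributes either a bounded multiple of the explicit $\frac1c$, or an $r$-independent additive perturbation of $|\log r|$ that is asymptotically negligible against the leading term of $\psi_\nu(r)$ as $r\to0$. (If one is willing to invoke a Gaussian correlation / Khatri--Šidák type inequality for sup-norm events, the subdivision can be carried out directly at the level of probabilities, $\pr{\sup_{0\le s\le T}\abs{\xn(s)}\le r}\ge\pr{\sup_{0\le s\le1}\abs{\xn(s)}\le r}^{\lceil T\rceil}$, giving $\ph_T(\xn,r)\le\lceil T\rceil\,\ph(\xn,r)$ at once; but the route above needs nothing beyond what Theorem~\ref{t1} already supplies.)
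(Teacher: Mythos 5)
Your proof is correct and takes essentially the same route as the paper. Both arguments change variables to reduce $\ph(\xn^c, r)$ to the small deviations of $\xn$ on $[0, 1/c]$, exploit the additivity of RKHS-ball metric entropy under subdivision into unit intervals (restriction cannot increase RKHS norm, and stationarity makes each piece an isometric copy of $\H_1$), and feed the resulting bound $H(\H_1^c, 2\eps) \leq \lceil 1/c \rceil H(\H_1, \eps)$ into Corollary~\ref{cor:l2} to convert the entropy estimate into a small-deviation estimate with the explicit factor $1/c$. The paper spells out the concatenation of net elements and the $\eps \to 2\eps$ adjustment a little more carefully, while you supply the name of Aronszajn's restriction theorem and flag the uniformity-in-$c$ issue explicitly, but conceptually these are the same proof.
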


\section{RKHS tools}
\setcounter{equation}{0}

In this section, we recall a powerful approach to the study of Gaussian
small deviations based on the entropy of the corresponding kernel (RKHS),
suggested by J.\ Kuelbs and W.\ Li in \cite{KuLi}. In the literature, this approach
is mainly applied to polynomial entropy, resp. small deviation function,
while the results we need should handle slowly varying functions. Therefore,
for the reader's convenience, we give here the complete proofs.

We work in a fairly general setting. Let $X$ be a centered Gaussian
vector in a separable Banach space $(E,\|\cdot\|)$. Then $X$ generates a
{\it kernel}, or RKHS, $\H$  which is a linear subspace of $E$ equipped
with the structure of a Hilbert space. For a detailed description of the RKHS
we refer to \cite{Lif1}. We denote by $\H_1$ the unit ball of  $\H$. Let
the covering number $N( r )$ be defined as  the minimal number of balls in the norm $\|\cdot\|$ of radius
$ r $ that is needed to cover $\H_1$.  Furthermore,  let $H( r ):=\log N( r )$ be the
corresponding metric entropy of $\H_1$.

We still study the the behavior of small deviation function
\[
\ph(r):=\ph(X, r):=-\log\P(||X||\leq r), \qquad r\to 0.
\]
Let us recall the central inequalities proved in \cite{KuLi}.

\begin{lemma} \label{lem:kl}
Let $ r >0$ and $\lambda>0$. Then
\[
H\left(\frac{2 r }{\lambda}\right) \leq \ph( r ) + \lambda^2/2,
\]
\[
H\left(\frac{ r }{\lambda}\right) \geq \ph(2 r ) +
\log \Phi( \lambda+\alpha_ r ),
\]
where $\Phi$ is the distribution function of the standard normal law,
and $\alpha_ r $ is defined by
$-\log \Phi(\alpha_ r ) = \ph( r )$.
\end{lemma}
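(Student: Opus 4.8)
\medskip
\noindent\emph{Proof plan.}
The plan is to derive both inequalities from three classical facts about the centered Gaussian measure $\mu=\mathcal{L}(X)$ on $E$: Anderson's inequality, $\mu(C+x)\le\mu(C)$ whenever $C\subseteq E$ is convex and symmetric; the shift estimate $\mu(C+h)\ge e^{-\|h\|_\H^2/2}\,\mu(C)$ for such $C$ and any $h\in\H$, which is a standard consequence of the Cameron--Martin density formula; and the Gaussian isoperimetric inequality of Borell and of Sudakov--Tsirelson, $\mu(A+t\H_1)\ge\Phi\!\bigl(\Phi^{-1}(\mu(A))+t\bigr)$. Throughout I write $rB:=\set{x\in E:\|x\|\le r}$, so that $\mu(rB)=e^{-\ph(r)}$ and the definition of $\alpha_r$ reads $\alpha_r=\Phi^{-1}(\mu(rB))$.

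For the first (upper) inequality I would pass from covering to packing. Pick a maximal family $h_1,\dots,h_N\in\H_1$ that is pairwise more than $2r/\lambda$ apart in $\|\cdot\|$; maximality forces the balls of radius $2r/\lambda$ about the $h_i$ to cover $\H_1$, so $N(2r/\lambda)\le N$. Rescaling by $\lambda$, the points $g_i:=\lambda h_i$ satisfy $\|g_i\|_\H\le\lambda$ and are pairwise more than $2r$ apart, so the translates $g_i+rB$ are pairwise disjoint. Summing their measures and using the shift estimate,
\[
1\ \ge\ \sum_{i=1}^{N}\mu(g_i+rB)\ \ge\ N\,e^{-\lambda^2/2}\,\mu(rB)\ =\ N\,e^{-\lambda^2/2-\ph(r)},
\]
so $H(2r/\lambda)\le\log N\le\ph(r)+\lambda^2/2$.

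For the second (lower) inequality I would instead start from a near-optimal covering $\H_1\subseteq\bigcup_{i=1}^{N}\bigl(h_i+(r/\lambda)B\bigr)$ with $N=N(r/\lambda)$. Rescaling by $\lambda$ and using $rB+rB\subseteq 2rB$ gives $rB+\lambda\H_1\subseteq\bigcup_{i=1}^{N}(\lambda h_i+2rB)$. Applying the isoperimetric inequality to $A=rB$, and then subadditivity followed by Anderson's inequality to the union on the right, one obtains
\[
\Phi(\lambda+\alpha_r)\ \le\ \mu\bigl(rB+\lambda\H_1\bigr)\ \le\ \sum_{i=1}^{N}\mu(\lambda h_i+2rB)\ \le\ N\,\mu(2rB)\ =\ N\,e^{-\ph(2r)},
\]
and taking logarithms yields $H(r/\lambda)=\log N\ge\ph(2r)+\log\Phi(\lambda+\alpha_r)$.

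The rescalings are routine; what needs care is the measure theory. Since $N(\cdot)$ is defined through balls of a prescribed radius, I would handle the open-versus-closed-ball bookkeeping (including the boundary case in the packing step) by the usual passage to a slightly smaller or larger radius, using monotonicity of the quantities involved. Also $A+\lambda\H_1$ need not be Borel, but in our situation it is squeezed between $rB+\lambda\H_1$ and a \emph{finite} union of translated balls, so the inner-measure form of the isoperimetric inequality is enough. I expect the only genuinely non-mechanical point to be reading off, from the shapes of the two target estimates, the correct rescaling factor $\lambda$ and the doubling $r\mapsto 2r$ of the radius that makes Anderson's inequality and Gaussian isoperimetry fit together; everything after that is bookkeeping.
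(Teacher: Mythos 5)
Your argument is correct. The paper itself gives no proof of this lemma --- it simply recalls it as ``the central inequalities proved in \cite{KuLi}'' --- and what you have written is a faithful reconstruction of the original Kuelbs--Li argument: the first bound by scaling a maximal $2r/\lambda$-packing of $\H_1$ to a disjoint family of balls around points of $\H$-norm at most $\lambda$ and applying the Cameron--Martin shift estimate, the second by scaling an $(r/\lambda)$-covering, enlarging it with $rB$, and playing the Gaussian isoperimetric inequality on $rB+\lambda\H_1$ against subadditivity and Anderson's inequality on the covering. Your closing remark about the inner-measure form of the isoperimetric inequality correctly handles the one measurability subtlety in the argument, so there is no gap.
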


We obtain the following corollary from the first inequality in
the case of a slowly varying entropy or small deviation function.

\begin{cor} \label{cor:l1}
Let $\beta$ be any real number  and $\gamma,C>0$. Then
\begin{itemize}
 \item  $\ph( r ) \lesssim C|\log  r |^\gamma (\log|\log r | )^\beta$
        implies
        $H( r ) \lesssim C|\log  r |^\gamma(\log|\log r | )^\beta$.
 \item  $H( r ) \gtrsim C|\log  r |^\gamma(\log|\log r | )^\beta$
        implies
        $\ph( r ) \gtrsim C|\log  r |^\gamma(\log|\log r | )^\beta$.
\end{itemize}
The relations also hold if $\lesssim$ and $\gtrsim$ are replaced
by $\preceq$ and $\succeq$, respectively.
\end{cor}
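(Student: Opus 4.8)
\emph{Proof plan.} The plan is to use only the first inequality of Lemma~\ref{lem:kl}, namely $H(2r/\lambda)\le\ph(r)+\lambda^2/2$, taking $\lambda$ to be a fixed positive constant (its value is immaterial; one may simply put $\lambda=1$). Write $g(r):=|\log r|^\gamma(\log|\log r|)^\beta$ for the comparison function. The only two elementary facts I would need are: (i) for every fixed $c>0$ one has $g(cr)/g(r)\to 1$ as $r\to 0$, since $|\log(cr)|=|\log r|-\log c$ gives $|\log(cr)|/|\log r|\to 1$ and $\log|\log(cr)|/\log|\log r|\to 1$; and (ii) since $\gamma>0$, any fixed constant --- in particular $\lambda^2/2$ --- is $o(g(r))$ as $r\to 0$. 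Fact (i) is exactly what the hypothesis ``slowly varying'' is meant to capture, and it is the only point requiring a (very short) computation.

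For the first bullet, suppose $\ph(r)\lesssim C\,g(r)$. Substituting $\rho=2r/\lambda$ in Lemma~\ref{lem:kl} gives $H(\rho)\le\ph(\lambda\rho/2)+\lambda^2/2$ for all small $\rho$. Fix $\eps>0$: for $\rho$ small, the hypothesis together with fact (i) gives $\ph(\lambda\rho/2)\le(1+\eps)C\,g(\lambda\rho/2)\le(1+\eps)^2C\,g(\rho)$, while fact (ii) gives $\lambda^2/2\le\eps\,g(\rho)$; hence $H(\rho)\le\big((1+\eps)^2+\eps\big)C\,g(\rho)$, and letting $\eps\downarrow 0$ yields $H(r)\lesssim C\,g(r)$. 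For the second bullet, suppose $H(r)\gtrsim C\,g(r)$. Rearranging the same inequality gives $\ph(r)\ge H(2r/\lambda)-\lambda^2/2$, and for fixed $\eps>0$ and $r$ small, $H(2r/\lambda)\ge(1-\eps)C\,g(2r/\lambda)\ge(1-\eps)^2C\,g(r)$ and $\lambda^2/2\le\eps\,g(r)$, so $\ph(r)\ge\big((1-\eps)^2-\eps\big)C\,g(r)$; letting $\eps\downarrow 0$ gives $\ph(r)\gtrsim C\,g(r)$.

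Finally, the $\preceq/\succeq$ versions follow by repeating the two arguments verbatim, only tracking the existence of a multiplicative constant instead of its sharp value (equivalently, they are immediate from the $\lesssim/\gtrsim$ versions together with fact (i)). I do not expect any real obstacle here: the argument is a one-variable substitution plus fact (i), and the only thing to be careful about is that a bounded multiplicative perturbation of the argument $r$ leaves the leading term of $|\log r|^\gamma(\log|\log r|)^\beta$ unchanged --- which is precisely where $\gamma>0$ and the slow growth of $(\log|\log r|)^\beta$ enter.
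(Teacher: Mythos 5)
Your argument is correct and is essentially the paper's proof: both use only the first inequality of Lemma~\ref{lem:kl} with a fixed value of $\lambda$. The paper's one-line proof simply takes $\lambda=2$, so that $2r/\lambda=r$ and the substitution and appeal to slow variation (your fact (i)) become unnecessary — only your fact (ii) is needed; otherwise the reasoning is the same.
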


\begin{proof} Simply set $\lambda=2$ in the first inequality in Lemma~\ref{lem:kl}.
\end{proof}

The arguments are slightly more involved when using the second inequality
because of its implicit nature. First recall that
$$\log \Phi(x) \sim -x^2/2,$$
as $x\to -\infty$.
This helps to simplify the second inequality in Lemma~\ref{lem:kl}.

\begin{lemma}
Let $\lambda=\lambda( r )>0$ be a function such that
$\lambda( r )\leq \sqrt{2 \ph( r )}$. Then, as $ r \to 0$,
\begin{equation} \label{e:kuelbsli2ex}
H\left(\frac{ r }{\lambda}\right) \gtrsim
\ph(2 r ) - \frac{1}{2}\, ( \lambda -\sqrt{2 \ph( r )})^2.
\end{equation}
\end{lemma}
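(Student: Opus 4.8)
The plan is to start from the second inequality of Lemma~\ref{lem:kl},
\[
H\!\left(\tfrac r\lambda\right)\ \ge\ \ph(2r)+\log\Phi(\lambda+\alpha_r),
\]
and to turn the implicit term $\log\Phi(\lambda+\alpha_r)$ into something explicit by first locating $\alpha_r$ and then applying Gaussian tail asymptotics to the argument $\lambda+\alpha_r$. Throughout I use that $\ph(r)\to\infty$ as $r\to0$ (which we may assume, as otherwise $\ph$ is trivial), so that every argument of $\Phi$ that appears tends to $-\infty$ and the sharpened form of $\log\Phi(x)\sim-x^2/2$, namely the Mills-ratio expansion
\[
-\log\Phi(x)=\tfrac{x^2}2+\log|x|+\tfrac12\log(2\pi)+o(1),\qquad x\to-\infty,
\]
is available.

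First I would pin down $\alpha_r$. From $-\log\Phi(\alpha_r)=\ph(r)\to\infty$ we get $\alpha_r\to-\infty$, and substituting $\alpha_r$ into the expansion above and solving gives $\alpha_r=-\sqrt{2\ph(r)}\,(1+o(1))$; feeding this back into the $\log|\alpha_r|$ term improves it to the \emph{additive} statement $\alpha_r+\sqrt{2\ph(r)}=O\!\big(\log\ph(r)/\sqrt{\ph(r)}\big)=o(1)$, and moreover $\alpha_r\ge-\sqrt{2\ph(r)}$ for $r$ small (the correction $\log|\alpha_r|+\tfrac12\log(2\pi)$ being positive). This additive — rather than merely multiplicative — control is what makes the rest of the argument go through.

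Next I would substitute. Writing $\alpha_r=-\sqrt{2\ph(r)}+\eta_r$ with $0\le\eta_r=o(1)$, monotonicity of $\Phi$ gives $\log\Phi(\lambda+\alpha_r)\ge\log\Phi\big(\lambda-\sqrt{2\ph(r)}\big)$. By hypothesis $\lambda\le\sqrt{2\ph(r)}$, so $\lambda-\sqrt{2\ph(r)}\le0$, and the tail expansion (using $\log(\sqrt{2\ph(r)}-\lambda)\le\tfrac12\log(2\ph(r))$) yields
\[
\log\Phi\big(\lambda-\sqrt{2\ph(r)}\big)\ \ge\ -\tfrac12\big(\lambda-\sqrt{2\ph(r)}\big)^2-\tfrac12\log\big(2\ph(r)\big)-O(1).
\]
Combining the three displays gives
\[
H\!\left(\tfrac r\lambda\right)\ \ge\ \ph(2r)-\tfrac12\big(\lambda-\sqrt{2\ph(r)}\big)^2-O\big(\log\ph(r)\big),
\]
which is (\ref{e:kuelbsli2ex}) once one observes that the correction $O(\log\ph(r))$ is of lower order than the right-hand side there: in every situation in which the lemma is applied one has $\ph(2r)\approx\ph(r)$ and $\ph(2r)-\tfrac12(\lambda-\sqrt{2\ph(r)})^2$ of order $\ph(r)$, so $\log\ph(r)$ is negligible against it.

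The step I expect to be the main obstacle is precisely the refined estimate for $\alpha_r$. A purely multiplicative $\alpha_r\sim-\sqrt{2\ph(r)}$ is not enough, because then the cross term in $(\lambda+\alpha_r)^2=(\lambda-\sqrt{2\ph(r)})^2+2\eta_r(\lambda-\sqrt{2\ph(r)})+\eta_r^2$ would only be $o(\sqrt{\ph(r)})$ rather than $o(1)$, and one could not cleanly recover $(\lambda-\sqrt{2\ph(r)})^2$ when $\lambda$ lies close to $\sqrt{2\ph(r)}$ — exactly the interesting range; the sign $\eta_r\ge0$ then conveniently kills the cross term outright. There is also the harmless degenerate case where $\sqrt{2\ph(r)}-\lambda$ stays bounded: then $\log\Phi(\lambda+\alpha_r)$ does not tend to $-\infty$, but $\tfrac12(\lambda-\sqrt{2\ph(r)})^2$ is bounded too, so (\ref{e:kuelbsli2ex}) holds trivially as soon as $\ph(2r)\to\infty$.
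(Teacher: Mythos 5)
The paper states this lemma without a written proof, giving only the hint that $\log\Phi(x)\sim -x^2/2$ as $x\to-\infty$. Your argument supplies a correct and, importantly, a \emph{careful} proof, and in doing so it pinpoints exactly the subtlety that a naive reading of the paper's hint would miss: the multiplicative asymptotic $\alpha_r\sim -\sqrt{2\ph(r)}$ is not, by itself, enough to pass from $\log\Phi(\lambda+\alpha_r)$ to $-\tfrac12(\lambda-\sqrt{2\ph(r)})^2$, because when $\lambda$ is close to $\sqrt{2\ph(r)}$ the cross term $2\eta_r(\lambda-\sqrt{2\ph(r)})$ with $\eta_r=o(\sqrt{\ph(r)})$ can swamp $(\lambda-\sqrt{2\ph(r)})^2$ itself. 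Your fix is the right one: from the Mills ratio, $-\log\Phi(x)>x^2/2$ for $x$ negative, so $\alpha_r\ge -\sqrt{2\ph(r)}$, hence $\eta_r\ge 0$, and monotonicity of $\Phi$ then gives $\log\Phi(\lambda+\alpha_r)\ge\log\Phi(\lambda-\sqrt{2\ph(r)})$ outright, after which a second application of the Mills-ratio expansion to the argument $\lambda-\sqrt{2\ph(r)}$ yields the $-\tfrac12(\lambda-\sqrt{2\ph(r)})^2 - O(\log\ph(r))$ bound. Note that, once you have the sign $\eta_r\ge0$, the quantitative refinement $\eta_r=O(\log\ph(r)/\sqrt{\ph(r)})$ you derive is not actually used in the final chain of inequalities; the monotonicity step makes it redundant, which streamlines the proof further.

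The only soft spot is the final absorption of the $O(\log\ph(r))$ remainder into the $\gtrsim$: as you say, this requires $\ph(2r)-\tfrac12(\lambda-\sqrt{2\ph(r)})^2 \gg \log\ph(r)$, which does not follow from the hypotheses as literally stated (one could, in principle, have this quantity tend to infinity slower than $\log\ph(r)$). You are right that this is harmless in every setting where the lemma is invoked (in the paper one always takes $\lambda=\sqrt{2\ph(r)}$ and works with logarithmic $\ph$, so the right-hand side is $\ph(2r)\approx\ph(r)\gg\log\ph(r)$), and the paper's own use of $\gtrsim$ is clearly meant in that regime. Your proof is, if anything, more precise than the one the authors appear to have had in mind.
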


The usual choice in the regularly varying case is
$\lambda=-\alpha_ r  \sim \sqrt{2 \ph( r )}$, which also works in the
case of slow variation. The result reads as follows.

\begin{cor} \label{cor:l2}
Let $\beta$ be any real and $\gamma,C>0$. Then
\begin{itemize}
  \item $H( r ) \lesssim C |\log  r |^\gamma (\log|\log r | )^\beta$
         implies
        $\ph( r ) \lesssim C  |\log  r |^\gamma(\log|\log r | )^\beta$.
  \item Assume that there is a constant $K>0$ such that $\ph( r /2)\leq K \ph( r )$
        for all $ r \in (0,1)$.
        Then \par $\ph( r )\gtrsim C |\log  r |^\gamma(\log|\log r | )^\beta$
        implies \par
        $H( r )\gtrsim C\left(1+\log K/(2 \log 2)\right)^{-\gamma}
        |\log  r |^\gamma(\log|\log r | )^\beta$.
\end{itemize}
The relations also hold if $\lesssim$ and $\gtrsim$ are replaced by $\preceq$ and
$\succeq$, respectively.
 \end{cor}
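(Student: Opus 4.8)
\textbf{Proof plan for Corollary~\ref{cor:l2}.}
The plan is to derive both implications directly from the simplified second Kuelbs--Li inequality \eqref{e:kuelbsli2ex}, exactly as Corollary~\ref{cor:l1} was derived from the first inequality in Lemma~\ref{lem:kl}. For the first bullet, I would take $\lambda=\lambda(r):=\sqrt{2\ph(r)}$ (which trivially satisfies the admissibility constraint $\lambda(r)\le\sqrt{2\ph(r)}$), so that the quadratic correction term in \eqref{e:kuelbsli2ex} vanishes and we obtain $H(r/\lambda)\gtrsim\ph(2r)$. Writing $s=r/\lambda$ and using that $\ph$ is eventually large so $\lambda(r)\to\infty$ slowly (polynomially in $|\log r|$, hence $\log|\log s|\sim\log|\log r|$ and $|\log s|\sim|\log r|$), the hypothesis $H(r)\lesssim C|\log r|^\gamma(\log|\log r|)^\beta$ applied at the argument $s=r/\lambda$ turns $H(r/\lambda)\gtrsim\ph(2r)$ into $\ph(2r)\lesssim C|\log r|^\gamma(\log|\log r|)^\beta$, and replacing $2r$ by $r$ (again a harmless perturbation of the logarithmic factors) gives the claimed bound on $\ph$.

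For the second bullet the argument is the same inequality read in the other direction, but now one cannot kill the correction term for free: choosing $\lambda=\sqrt{2\ph(r)}$ would give $H(r/\sqrt{2\ph(r)})\gtrsim\ph(2r)$, and the difficulty is that the covering radius $r/\sqrt{2\ph(r)}$ is \emph{smaller} than $r$, so to convert a statement about $H$ at this radius into the desired statement about $H(r)$ one must control how much $H$ can grow when the radius shrinks by a factor of order $\sqrt{2\ph(r)}$. This is precisely where the hypothesis $\ph(r/2)\le K\ph(r)$ enters: it forces $\ph$ to grow at most geometrically along dyadic scales, hence polynomially, hence $|\log(r/\sqrt{2\ph(r)})|=|\log r|+\tfrac12\log(2\ph(r))\le(1+o(1))|\log r|$, and more precisely the $K$-doubling bound yields $\log\ph(r)\le(\log K/\log 2)\,|\log r|+O(1)$, which after tracking constants produces the factor $\bigl(1+\log K/(2\log 2)\bigr)^{-\gamma}$ multiplying $|\log r|^\gamma$. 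The slowly varying factor $(\log|\log r|)^\beta$ is unaffected since $\log|\log(r/\sqrt{2\ph(r)})|\sim\log|\log r|$.

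Concretely, I would: (i) fix $\lambda(r)=\sqrt{2\ph(r)}$ and record $H(r/\lambda(r))\gtrsim\ph(2r)$ from \eqref{e:kuelbsli2ex}; (ii) using $\ph(r/2)\le K\ph(r)$ iterated, show $\ph(r)\le \ph(r_0)\,K^{\lceil \log_2(r_0/r)\rceil}$ for small $r$, hence $\sqrt{2\ph(r)}\le |\log r|^{\,\log K/(2\log 2)+o(1)}$ and therefore, with $s(r):=r/\lambda(r)$, one has $|\log s(r)|\le\bigl(1+\tfrac{\log K}{2\log 2}+o(1)\bigr)|\log r|$ while $\log|\log s(r)|\sim\log|\log r|$; (iii) substitute $r=s^{-1}(\cdot)$, i.e. read the inequality $H(s(r))\gtrsim\ph(2r)\gtrsim C|\log(2r)|^\gamma(\log|\log 2r|)^\beta\sim C|\log r|^\gamma(\log|\log r|)^\beta$ as a lower bound for $H$ at radius $s(r)$, and re-express $|\log r|^\gamma$ in terms of $|\log s(r)|^\gamma$ using the bound from (ii), losing exactly the factor $\bigl(1+\log K/(2\log 2)\bigr)^{-\gamma}$; (iv) note that the $\preceq/\succeq$ versions follow verbatim since only the limsup comparisons were used. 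The first bullet needs no doubling hypothesis because there the covering radius $r/\lambda(r)$ is again $\le r$ up to a slowly varying factor and the inequality is used in the benign direction (large $H$ at a small radius trivially dominates $H$ at a larger one is \emph{not} what is needed — rather one simply evaluates the hypothesis on $H$ at the smaller argument $s(r)$, which is legitimate and only improves constants since $|\log s(r)|\sim|\log r|$ when $\lambda$ grows merely polynomially, a fact guaranteed a priori by the $H$-upper bound via Corollary~\ref{cor:l1}-type reasoning or simply by $\lambda(r)^2=2\ph(r)\lesssim 2C|\log r|^\gamma(\log|\log r|)^\beta$).

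The main obstacle is step (ii)--(iii): making precise that the substitution $r\mapsto s(r)=r/\sqrt{2\ph(r)}$ distorts $|\log r|$ by exactly the factor $1+\log K/(2\log 2)$ and nothing worse, and that this distortion is responsible for the stated loss in the constant. Everything else — the admissibility of $\lambda$, the asymptotic $\log\Phi(x)\sim-x^2/2$ already folded into \eqref{e:kuelbsli2ex}, and the invariance of the $\log|\log r|$ factor — is routine bookkeeping with slowly varying functions.
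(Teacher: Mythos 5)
Your overall plan matches the paper's: take $\lambda(r)=\sqrt{2\ph(r)}$ so that the quadratic correction in \eqref{e:kuelbsli2ex} vanishes, record $H(r/\lambda)\gtrsim\ph(2r)$, and then translate between the argument $r$ and the shifted argument $r/\lambda$. For the second bullet your steps (ii)--(iii) are essentially the paper's: iterate the doubling bound to get $\ph(r)\le K'r^{-h}$ with $h=\log K/\log 2$, deduce $r/\lambda\ge r^{1+h/2}/\sqrt{2K'}$, and substitute to lose exactly the factor $\left(1+\log K/(2\log 2)\right)^{-\gamma}$. (One intermediate line is a slip: the doubling bound gives $\sqrt{2\ph(r)}\le\mathrm{const}\cdot r^{-h/2}$, a power of $1/r$, \emph{not} a power of $|\log r|$ as you wrote, though your subsequent conclusion $|\log(r/\lambda)|\le(1+h/2+o(1))|\log r|$ is the right one.)

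The first bullet has a genuine gap. To pass from $H(r/\lambda)\gtrsim\ph(2r)$ to a bound with $|\log r|$ and $\log|\log r|$ on the right, you must know \emph{a priori} that $\lambda(r)=\sqrt{2\ph(r)}$ grows slower than every power of $1/r$. Neither of your two justifications establishes this: the appeal to ``$\lambda(r)^2=2\ph(r)\lesssim 2C|\log r|^\gamma(\log|\log r|)^\beta$'' is circular (it is the conclusion you want), and ``Corollary~\ref{cor:l1}-type reasoning'' points in the wrong direction, since both bullets of Corollary~\ref{cor:l1} pass from bounds on $\ph$ to bounds on $H$, never the reverse. The paper closes exactly this hole by noting that the hypothesis gives $H(r)\preceq r^{-\tau'}$ for every $\tau'>0$ and then invoking Proposition~2.4 of \cite{LiLin} to obtain $\ph(r)\preceq r^{-\tau}$ for every $\tau>0$; this bootstrap gives $|\log(r/\lambda)|\le(1+\tau/2+o(1))|\log r|$ and keeps the $\log|\log r|$ factor intact, and letting $\tau\to0$ after a $\liminf$ computation recovers the stated constant $C$. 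Without some such external crude ``entropy upper bound implies small-deviation upper bound'' ingredient, the substitution in your step (iii) is not justified and the argument does not close.
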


\begin{proof}
Let $\lambda:=\sqrt{2 \ph( r )}$. For the first implication note that the
assumption for $H$, relation (\ref{e:kuelbsli2ex}), and the fact that
$ r /\lambda\to 0$ imply that
\begin{equation}
C | \log  r  - \log \sqrt{\ph( r )}|^\gamma
(\log|\log r /\sqrt{\ph( r )}| )^\beta \gtrsim \ph(2 r ).
\label{eqn:nnphoemo} \end{equation}

The assumption for $H$ furthermore implies that $H( r )\preceq  r ^{-\tau'}$
for any $\tau'>0$. By Proposition~2.4 in \cite{LiLin}, this yields
\begin{equation}
    \ph( r )\preceq  r ^{-\tau},\qquad
    \text{ for any $\tau>0$.} \label{eqn:phiphoe}
\end{equation}
Thus
$$
\limsup_{ r \to 0} \frac{\log \sqrt{\ph( r )}}{\ph(2  r )^{1/\gamma}}
\leq \frac{\tau}{2} \limsup_{ r \to 0}
\frac{|\log  r |}{\ph(2  r )^{1/\gamma}}.
$$

Also (\ref{eqn:phiphoe}) implies that
$(\log|\log r /\sqrt{\ph( r )}| )^\beta$ can be replaced by
$(\log|\log r | )^\beta$ in (\ref{eqn:nnphoemo}). Therefore
\begin{multline*}
  \frac{1}{C} \leq \liminf_{ r \to 0} \frac{|\log  r
  - \log \sqrt{\ph( r )}|^\gamma}{\ph(2 r )}
  (\log|\log r | )^\beta \\
  = \liminf_{ r \to 0}
  \left|\frac{|\log  r |}{\ph(2 r )^{1/\gamma}}
  + \frac{\log \sqrt{\ph( r )}}{\ph(2 r )^{1/\gamma}}\right|^\gamma
  (\log|\log r | )^\beta
  \leq   \left(1+\frac{\tau}{2}\right)^\gamma
  \liminf_{ r \to 0}  \frac{|\log  r |^\gamma }{\ph(2 r )}
  (\log|\log r | )^\beta.
\end{multline*}
Letting $\tau\to 0$ yields the assertion.

Let us come to the second implication. We may assume that $K\geq 1$.
First note that the regularity assumption on $\ph$ implies that
$\ph( r ) \leq K'  r ^{-h}$ with $h=\log K / \log 2$,
$K':= \ph(1) K$ and all $0< r <1$.
Now if $\ph( r )\gtrsim C |\log  r |^\gamma(\log|\log r | )^\beta$,
we obtain by (\ref{e:kuelbsli2ex}) that
$$
  H\left( \frac{ r }{\lambda}\right)
  \gtrsim C |\log  r |^\gamma (\log|\log r | )^\beta.
$$
We set $ r ':= r /\lambda$. We obtain, by the assumption on $\ph$ that
$ r '\geq  r ^{1+h/2}/\sqrt{2 K'}$. Therefore,
$$
   H\left( r ^{1+h/2} / \sqrt{2 K'}\right) \geq H( r ')
   =H\left( \frac{ r }{\lambda}\right)
   \gtrsim C |\log  r |^\gamma (\log|\log r | )^\beta.
$$
In other words,
$$
   H( r )\gtrsim C |\log  r ^{1/(1+h/2)}|^\gamma (\log|\log r | )^\beta
   =
   \frac{C}{\left(1+\log K/(2 \log 2)\right)^\gamma} \,
   |\log  r |^\gamma (\log|\log r | )^\beta.
$$
\end{proof}

\begin{rem}
{\rm Note that, as in the regularly varying case, one needs to know something
about the maximal increase of $\ph$ in order to translate a lower bound for
$\ph$ into a lower bound for $H$. If it is already known that $\ph$ behaves
logarithmically, then the assumption holds for any $K>1$ and one also obtains
strong asymptotic equivalence.
}
\end{rem}

As a particular case of Corollaries~\ref{cor:l1} and~\ref{cor:l2} we obtain the following.

\begin{cor} \label{cor_ph_eq_H}
Let $\beta$ be any real and $\gamma>0$. Then
$$
   \ph( r ) \approx |\log  r |^\gamma (\log|\log r | )^\beta
   \qquad  \Leftrightarrow\qquad
   H( r )\approx |\log  r |^\gamma (\log|\log r | )^\beta.
$$
\end{cor}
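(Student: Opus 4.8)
The plan is to obtain both implications by assembling the four one-directional statements already contained in Corollaries~\ref{cor:l1} and~\ref{cor:l2}; beyond that, the only point that will call for a word of justification is the regularity hypothesis of Corollary~\ref{cor:l2}. Throughout, write $f(r):=|\log r|^\gamma(\log|\log r|)^\beta$.

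For the implication $\ph(r)\approx f(r)\Rightarrow H(r)\approx f(r)$ I would argue as follows. The half $\ph(r)\preceq f(r)$, fed into the first bullet of Corollary~\ref{cor:l1} in its $\preceq$-form, immediately gives $H(r)\preceq f(r)$. For the matching lower bound, first observe that $f(r/2)/f(r)\to 1$ as $r\to 0$ (because $|\log(r/2)|/|\log r|\to1$, and likewise for the iterated logarithm), so that the assumed two-sided estimate $\ph\approx f$ forces $\ph(r/2)/\ph(r)\to 1$; in particular there is a finite constant $K$ with $\ph(r/2)\le K\ph(r)$ for all sufficiently small $r$, which — enlarging $K$ if necessary, or, what already suffices for the present asymptotic statement, keeping the inequality only for small $r$ and adjusting constants — one may take to hold on all of $(0,1)$. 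The second bullet of Corollary~\ref{cor:l2} then yields $H(r)\gtrsim(1+\log K/(2 \log 2))^{-\gamma}f(r)$, i.e.\ $H(r)\succeq f(r)$, and combining with the upper bound gives $H(r)\approx f(r)$. Since $\succeq$ absorbs the positive constant, there is no need to let $K\to1$ here; that refinement (cf.\ the preceding Remark) would only be relevant for a strong-equivalence version of the statement.

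The converse $H(r)\approx f(r)\Rightarrow\ph(r)\approx f(r)$ requires no regularity assumption at all: $H(r)\preceq f(r)$ together with the first bullet of Corollary~\ref{cor:l2} gives $\ph(r)\preceq f(r)$, and $H(r)\succeq f(r)$ together with the second bullet of Corollary~\ref{cor:l1} gives $\ph(r)\succeq f(r)$. I expect the only genuine obstacle in the whole argument to be the verification of $\ph(r/2)\le K\ph(r)$ demanded by Corollary~\ref{cor:l2} — i.e.\ ruling out a too-rapid growth of $\ph$ between consecutive scales — and it is precisely here that the a priori knowledge that $\ph$ is of logarithmic order enters, making the ratio $\ph(r/2)/\ph(r)$ not merely bounded but convergent to $1$.
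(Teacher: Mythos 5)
Your argument is correct and matches the paper's intended route: the corollary is stated there simply as ``a particular case of Corollaries~\ref{cor:l1} and~\ref{cor:l2}'', and your assembly of the four one-directional bounds, together with the check that the regularity hypothesis $\ph(r/2)\le K\ph(r)$ on $(0,1)$ is available in the direction where it is needed, is exactly what is meant. One small inaccuracy worth flagging: $\ph\approx f$ does \emph{not} force $\ph(r/2)/\ph(r)\to 1$ (that would follow from the strong equivalence $\ph\sim f$, but $\approx$ only pins $\ph/f$ between two positive constants, so the ratio $\ph(r/2)/\ph(r)$ is eventually bounded by the quotient of those constants and need not converge); boundedness, however, is all that Corollary~\ref{cor:l2} requires, and indeed all you actually use once you extract a finite $K$, so the conclusion of your proof is unaffected.
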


\section{Entropy of stationary RKHS}
\setcounter{equation}{0}

Let now $X$ be a complex valued  centered stationary Gaussian process with spectral
measure $F$ and continuous sample paths. We consider
$X$ as a random element of $E=C[0,1]$. It is well known
(see e.g.\ \cite{Lif1}) that the RKHS $\H$ admits the following representation:
$h\in \H$ iff
\begin{equation} \label{e1}
          h(t)= \int_{-\infty}^{\infty} \ell(u) e^{-itu} F(du),\qquad \ell\in L_2(\R,F),
\end{equation}
for $0\leq t \leq 1$ and
\[
||h||_{\H}=\inf ||\ell||_{2,F}
\]
where infimum is taken over all $\ell$ satisfying (\ref{e1}).
In particular, $h\in\H_1$ (here, as above, $\H_1$ is the unit ball in $\H$) iff (\ref{e1})
holds with $\ell$ such that $||\ell||_{2,F}\leq 1$.

Now we specify this general scheme to the processes we are interested in and evaluate the entropy.

\subsection{Continuous spectrum}
Let now $F(du)=f_\nu(u)du$, where $f_\nu(u)=e^{-|u|^\nu}$, $\nu>0$.
We prove the following.

\begin{prop} \label{entr_cont} For any $\nu >1$ it is true that
\[
    H(\H_1,\eps ) \approx  \frac{|\log\eps|^2}{\log|\log\eps|}\ .
\]
\end{prop}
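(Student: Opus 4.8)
The plan is to reduce the estimate of $H(\H_1,\eps)$ to a classical result on the metric entropy of classes of analytic functions. The starting observation is that by the representation (\ref{e1}), every $h\in\H_1$ extends to an entire function on $\C$, since the spectral density $f_\nu(u)=e^{-|u|^\nu}$ decays super-exponentially for $\nu>1$ and therefore the integral $\int \ell(u)e^{-zu}f_\nu(u)\d u$ converges for all complex $z$ and defines an entire function. One should quantify the growth: for $z=t+\ii s$ with $0\le t\le 1$ and $|s|\le\rho$, a Cauchy--Schwarz estimate gives
\[
|h(z)|\le \|\ell\|_{2,F}\Big(\int e^{2\rho|u|}f_\nu(u)\,\d u\Big)^{1/2}
\le \Big(\int e^{2\rho|u|-|u|^\nu}\,\d u\Big)^{1/2}.
\]
A Laplace/saddle-point evaluation of the last integral shows $\log\int e^{2\rho|u|-|u|^\nu}\d u \asymp \rho^{\nu/(\nu-1)}$ as $\rho\to\infty$. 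Hence $\H_1$ embeds, up to constants, into the class of functions analytic and bounded by $M(\rho)=\exp(c\,\rho^{\nu/(\nu-1)})$ on the strip (or a suitable neighborhood in $\C$) of half-width $\rho$ around $[0,1]$.

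Next I would invoke the classical entropy estimates for classes of bounded analytic functions on a domain $D_1\Subset D_2$, with $D_1=[0,1]$ (or a thin neighborhood) and $D_2$ a larger neighborhood: if functions are bounded by $M$ on $D_2$ and one measures the sup-norm on $D_1$, the metric entropy at scale $\eps$ is of order $(\log(M/\eps))^2$ — this is the Kolmogorov--Vitushkin-type bound, see the analytic-functions entropy literature. The subtlety here is that we do not have a \emph{fixed} larger domain: we get to choose the strip half-width $\rho$, paying $M(\rho)=\exp(c\rho^{\nu/(\nu-1)})$ for the bound but gaining because a wider strip makes the restriction to $[0,1]$ more compact. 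Optimizing over $\rho$: the entropy contribution behaves roughly like $\rho^{-1}(\log(M(\rho)/\eps))^2\asymp \rho^{-1}(\rho^{\nu/(\nu-1)}+|\log\eps|)^2$, and balancing the two terms in the bracket gives $\rho\asymp|\log\eps|^{(\nu-1)/\nu}$, whence $\log(M(\rho)/\eps)\asymp|\log\eps|$ and the entropy is $\asymp|\log\eps|^2/\rho\asymp|\log\eps|^2/|\log\eps|^{(\nu-1)/\nu}=|\log\eps|^{1+1/\nu}$ — which is \emph{not} what we want. So the naive balancing is too crude; the correct book-keeping must track the logarithmic factor more carefully, and the $\log|\log\eps|$ in the denominator will emerge from a more precise form of the analytic-entropy estimate in which the number of "effective degrees of freedom" on the strip of half-width $\rho$ is $\asymp\rho$ but each carries only $\asymp\log(\text{something})$ bits; I expect the right statement to be that $n$ Taylor/interpolation coefficients suffice when $n\log n\gtrsim$ (the relevant combination of $\log M$ and $|\log\eps|$), which inverts to $n\asymp|\log\eps|/\log|\log\eps|$ after the optimization, giving $H\asymp n\cdot|\log\eps|/(\text{const})\asymp|\log\eps|^2/\log|\log\eps|$.

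For the lower bound I would exhibit an explicit well-separated family in $\H_1$: take $h_a(t)=c\,\ell_a$ with $\ell_a(u)=e^{\ii a u}$ suitably truncated and normalized, or more robustly use characters $e^{\ii a t}$ for $a$ in an arithmetic progression up to some cutoff $A$, checking that their $\H$-norm stays bounded (this needs $\int e^{2|a||u|^{?}}\cdots$ — rather, since $\ell_a$ must lie in $L_2(F)$, one uses $\ell_a(u)=\ind_{[a,a+1]}(u)/\sqrt{F([a,a+1])}$, giving $h_a(t)=F([a,a+1])^{-1/2}\int_a^{a+1}e^{-\ii tu}f_\nu(u)\d u$) and that on $[0,1]$ they are mutually $\eps$-separated in sup-norm for $\eps$ down to an appropriate threshold, with the number of such $a$ being of the right order. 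The cardinality of this net, combined with how large the parameters can be taken while keeping separation $\ge\eps$, must reproduce $|\log\eps|^2/\log|\log\eps|$.

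The main obstacle, I expect, is precisely this matching of the logarithmic correction: the gross power $|\log\eps|^2$ is governed by classical analytic-entropy results (one just plugs in the super-exponential growth rate of $\H_1$ on strips), but producing the exact denominator $\log|\log\eps|$ — rather than some other power of $|\log\eps|$ — requires the sharp form of the entropy of analytic functions where the trade-off is "$n$ coefficients, each known to precision requiring $\approx n\log n$ total bits," together with an upper bound on $\H_1$ functions that is itself sharp up to lower-order terms. Getting the constant in front of $|\log\eps|^2/\log|\log\eps|$ right (as the $\approx$ in the statement permits it to be unspecified, this is only up to a multiplicative constant, which is a relief) still demands that the upper and lower constructions be calibrated against the same extremal rate. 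I would organize the proof as: (i) strip-growth lemma for $\H_1$ via saddle point; (ii) quote/adapt the analytic-entropy theorem with the free domain parameter; (iii) optimize; (iv) lower bound via an explicit separated net of shifted-indicator spectral densities.
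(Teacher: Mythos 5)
Your upper‑bound reduction (showing $\H_1$ maps into entire functions of finite order via Cauchy–Schwarz and a Laplace evaluation) starts out the same way as the paper, but then you diverge onto a route that does not close. You try to apply the entropy bound for analytic functions on a pair of nested \emph{bounded} domains, with the strip half‑width $\rho$ as a free parameter, and your own optimization produces the rate $|\log\eps|^{1+1/\nu}$, which you correctly note is wrong. You then gesture at ``more careful book‑keeping'' with an $n\log n$ heuristic, but that is not a proof — you have not actually produced an upper bound of the claimed order. The paper avoids this entirely: it quotes a specialized result (Theorem~XX of Kolmogorov–Tikhomirov \cite{KT}) which gives the $\eps$‑entropy of the class of \emph{all} entire functions $h$ on $\C$ satisfying $|h(z)|\le C_1\exp\{C_2|z|^{\rho}\}$, measured in sup‑norm on $[0,1]$, as $\approx|\log\eps|^2/\log|\log\eps|$ \emph{for every finite order $\rho>0$}. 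This is precisely the point your argument misses: the correct comparison class is not ``bounded on a strip'' but ``entire of finite order,'' and the $\nu$‑independence of the answer is a feature of that theorem, not something one extracts by balancing strip widths. You need to replace step~(ii)–(iii) of your outline by a citation to this form of the Kolmogorov–Tikhomirov result (or reprove it), since your proposed optimization demonstrably gives the wrong power.

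For the lower bound your sketch is both incomplete and, as written, doomed. Taking single normalized spectral window functions $h_a$, $a$ ranging over a cutoff $A$, gives a family of cardinality $O(A)$, hence entropy only $O(\log A)$; since $A$ can be at most polynomial in $|\log\eps|$ (beyond that the normalization blows up and membership in $\H_1$ fails), you get entropy $O(\log|\log\eps|)$, astronomically below the target $|\log\eps|^2/\log|\log\eps|$. To reach the target you would have to pass to an exponentially large family of linear combinations and control separation and $\H$‑norm simultaneously, which you do not attempt. The paper's lower bound instead uses a multiplication trick that sidesteps all combinatorics: it takes the class $\Psi_K$ of entire functions of order $1/2$ bounded by $K\exp\{|z|^{1/2}\}$, whose entropy is again $\approx|\log\eps|^2/\log|\log\eps|$ by the same Theorem~XX, and multiplies it by an auxiliary entire function $G(z)=\prod_k\frac{\sin(a_k z)}{a_k z}$ (with $\sum a_k=1$, $a_k\sim k^{-1-\gamma}$) which is bounded away from $0$ on $[0,1]$, has exponential type $\le1$, and decays like $\exp(-c|t|^{1/(1+\gamma)})$ on $\R$. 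Products $\psi G$ then lie in $L_2(\R)$ and are of exponential type $\le1$, so Paley–Wiener puts their Fourier transforms inside $[-1,1]$, where $f_\nu\ge c_f>0$; this shows $\Psi_K G\subset\H_1$ for $K$ small, and the lower bound on $H(\H_1,\eps)$ follows because multiplication by $G$ does not decrease sup‑distances on $[0,1]$ by more than the factor $\theta_G=\inf_{[0,1]}|G|>0$. This is a genuinely different — and in fact essential — mechanism from what you propose, and your construction as stated would not yield the result.
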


\begin{proof}

{\it Upper bound.}\  Let $h \in \H_1$. Then the representation (\ref{e1}) holds with some $\ell$
such that
\[
    ||\ell||_{L_2(\R,F)}^2 = \int_{-\infty}^{\infty} |\ell(u)|^2 f_\nu(u) du \leq 1.
\]
Clearly, $h$ turns out to be an entire analytic function well defined on $\C$ by the same
expression (\ref{e1}) and by H\"older's inequality
\[
     |h(z)| \leq \int_{-\infty}^{\infty} e^{|\Imag(z)|\, |u|} |\ell(u)| f_\nu(u) du
            \leq \left( \int_{-\infty}^{\infty} e^{2|\Imag(z)|\, |u|}  f_\nu(u) du \right)^{1/2}:= M_\nu(2|\Imag(z)|).
\]
Since
\[
    \log M_\nu(r) =\frac 12 \ \log  \int_{-\infty}^{\infty} e^{r \, |u|-|u|^{\nu}}  du
    \sim \frac{(\nu-1)\, r^{\nu/(\nu-1)}} {2\, \nu^{\nu/(\nu-1)}}, \qquad \textrm{as} \ r\to\infty,
\]
it follows that
\be \label{c1c2}
   |h(z)| \leq M_\nu(2|\Imag(z)|) \leq C_1\ \exp \{ C_2  |\Imag(z)|^{\nu/(\nu-1)}  \},    \qquad \forall h\in H_1, z\in\C,
\ee
with appropriate constants $C_1=C_1(\nu)$, $C_2=C_2(\nu)$. It is known from Theorem XX of \cite{KT} that the entropy of the
class of all entire analytic functions  $\A(C_1,C_2,\nu)$ satisfying the even weaker condition
\be \label{c1c2_mod}
   |h(z)| \leq C_1\ \exp \{ C_2  |z|^{\nu/(\nu-1)}  \},    \qquad \forall z\in\C,
\ee
verifies
\[
 H(\A(C_1,C_2,\nu),\eps ) \approx \frac{|\log\eps|^2}{\log|\log\eps|}\ .
\]
Since $\H_1\subset\A(C_1,C_2,\nu)$, we obtain
\[
 H(\H_1,\eps ) \preceq \frac{|\log\eps|^2}{\log|\log\eps|}\ .
\]
\medskip

{\it Lower bound.}\ Here we will only need an inequality
\be \label{large_spectr}
   f(u)\geq c_f, \qquad |u|\leq 1,
\ee
for a constant $c_f > 0$,
which is fulfilled for all densities $f_\nu$, $\nu>0$.

We start with a construction of an auxiliary function and study its properties. Take
any $\gamma\in (0,1)$ and let
a sequence $(a_k)_{k\geq 1}$ be defined by $a_k=ck^{-1-\gamma}$ and normalized so that
$\sum_{k=1}^{\infty} a_k =1$. Let
\[
G(z)= \prod_{k=1}^\infty \frac{\sin(a_k z)}{a_k z}, \qquad z\in \C.
\]
Since
\[
\frac{|\sin(z)|}{|z|} \leq \sum_{j=1}^\infty \frac{|z|^{j-1}}{j!} \leq e^{|z|},
\]
we have
\be \label{g_incr}
|G(z)|\leq \exp\left( \sum_{k=1}^{\infty} a_k |z| \right) = e^{|z|}.
\ee
The function $G$ is rapidly decreasing on the real line. Namely, for any large $t\in \R$
choose a positive integer $\kappa=\kappa(t)$ such that $a_\kappa|t|\sim 2$, i.e.
$\kappa\sim(c|t|/2)^{\frac{1}{1+\gamma}}$. Then
\be \label{g_decr}
|G(t)|\leq \prod_{k=1}^\kappa |a_k t|^{-1} \leq 2^{-\kappa}
\leq \exp\left(-C_G |t|^{\frac{1}{1+\gamma}}\right)
\ee
with appropriate $C_G>0$. Finally, notice that
\be \label{g_nonvan}
\theta_G:=\inf_{0\leq t\leq 1} |G(t)|>0,
\ee
since the smallest zero of $G$ is attained at $\frac{\pi}{c}>\pi>1$.

Now we start the entropy estimation. Consider a class $\Psi_K$ of analytic functions $\psi$ on complex plane satisfying
\be \label{exp_bound}
  |\psi(z)| \leq K \exp \{ |z|^{1/2}\}, \qquad z\in \C.
\ee
Again by Theorem XX in \cite{KT} it is true that
\be \label{H_Psi}
   H(\Psi_K,\eps ) \approx \frac{|\log\eps|^2}{\log|\log\eps|}\ .
\ee
Next, consider a class of functions
\[
   B_{K} = \{b: b(z)=\psi(z) G(z),\ \psi \in \Psi_K, z\in \C  \}.
\]
With a minor abuse of notation, we do not distinguish the functions from
$B_{K}$ and their restrictions on $[0,1]$. Clearly,
\be \label{H_B}
     H(B_{K}, \eps ) \geq H(\Psi_K, \theta_G^{-1} \eps ) \succeq \frac{|\log\eps|^2}{\log|\log\eps|} \ .
\ee
We will show now that for an appropriate choice of the parameter $K$ it is true that $B_{K}\subset \H_1$.
Let $b\in B_{K}$.
Then by (\ref{exp_bound}) and (\ref{g_incr}) we have
\[
|b(z)|\leq K \exp\left\{|z|+|z|^{1/2}\right\} .
\]
Moreover, by (\ref{exp_bound}) and (\ref{g_decr})
\[
   ||b||^2_{L_2(\R)} \leq K^2 \int_{-\infty}^{\infty} \exp\left\{2|t|^{1/2}-2C_G |t|^{\frac{1}{1+\gamma}} \right\} dt
   :=K^2 C_{G,2}^2<\infty.
\]
By using these two properties, it follows from the classical Paley--Wiener theorem (\cite{PW} or \cite{Ach}, Chap.\ IV) that
the Fourier transform
\[
\hat b(u)
= \frac{1}{\sqrt{2\pi}}   \int_{-\infty}^{\infty} e^{iut}b(t)dt
= \frac{1}{\sqrt{2\pi}}   \int_{-\infty}^{\infty} e^{iut} \psi(t) G(t)dt
\]
vanishes outside the interval $[-1,1]$.

On the other hand, we can write
\begin{eqnarray*}
b(t) &=& \frac{1}{\sqrt{2\pi}}    \int_{-1}^{1} e^{-iut}\hat b(u)du \\
     &=& \frac{1}{\sqrt{2\pi}}  \int_{-1}^{1} e^{-iut}\ \frac{\hat b(u)}{f(u)}\ f(u) du \\
     &=:&  \int_{-\infty}^{\infty} e^{-iut}\ell(u)\ f(u) du.
\end{eqnarray*}
It remains to show that $||\ell||_{2,F}\leq 1$. By using (\ref{large_spectr}) we have, indeed,
\begin{eqnarray*}
||\ell||_{2,F}^2 &=& \frac{1}{2\pi} \int_{-1}^{1} \frac{|\hat b(u)|^2}{f(u)}\  du
\\
&\leq& \frac{1}{2\pi c_f} ||\hat b||^2_{L_2(\R)} = \frac{1}{2\pi c_f} ||b||^2_{L_2(\R)}
\leq
\frac{K^2 C_{G,2}^2}{2\pi c_f} \leq 1,
\end{eqnarray*}
whenever $K$ is chosen sufficiently small (depending on $c_f$).
Thus $B_{K} \subset \H_1$ and we obtain from (\ref{H_B})
\[
H(\H_1, \eps ) \geq H(B_{K}, \eps )  \succeq \frac{|\log\eps|^2}{\log|\log\eps|} \ ,
\]
as required.
\end{proof}

For small values of $\nu$ we only need the following upper bound.

\begin{prop} \label{entr_cont1}
For any $\nu \leq 1$ it is true that
\[
    H(\H_1,\eps ) \preceq |\log\eps|^{1+1/\nu} .
\]
\end{prop}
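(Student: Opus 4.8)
The plan is to combine the spectral representation~(\ref{e1}) with a band-limiting argument. Although for $\nu\le1$ the elements of $\H_1$ need not be analytic, each of them coincides, up to an error that is uniformly small on $[0,1]$, with a band-limited function of moderate exponential type; such a function is in turn well approximated on $[0,1]$ by a polynomial of low degree, and the $\eps$-entropy of classes of low-degree, uniformly bounded polynomials is elementary. Throughout, $C_\nu$ denotes a positive constant depending only on $\nu$ whose value may change from line to line.

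Fix a small $\eps>0$ and set $T=T(\eps):=(3|\log\eps|)^{1/\nu}$, so that $T\approx|\log\eps|^{1/\nu}$ and, since $\nu\le1$, $T\ge 3|\log\eps|$ for $\eps$ small. Given $h\in\H_1$, write $h(t)=\int_{\R}\ell(u)e^{-itu}f_\nu(u)\,du$ with $\|\ell\|_{L_2(\R,F_\nu)}\le1$, and split $h=g+\rho$, where $g$ collects the part of the integral over $\{|u|\le T\}$ and $\rho$ the part over $\{|u|>T\}$. The Cauchy--Schwarz inequality together with the elementary tail bound $\int_{|u|>T}e^{-|u|^\nu}\,du\le C_\nu\,T\,e^{-T^\nu}$ (valid for $\nu\le1$ and $T$ large) gives
\[
\|\rho\|_{\infty,[0,1]}\le\Big(\int_{|u|>T}f_\nu(u)\,du\Big)^{1/2}\le C_\nu\,T^{1/2}\,e^{-T^\nu/2}\le\eps
\]
for all sufficiently small $\eps$, by the choice of $T$. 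Hence it is enough to estimate, at scale $\eps$, the entropy of the family of functions $g$ arising in this way.

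Each such $g$ extends to an entire function, defined by the same integral over $[-T,T]$, and a further application of the Cauchy--Schwarz inequality against the finite measure $\unit_{[-T,T]}f_\nu(u)\,du$ yields
\[
|g(z)|\le\Big(\int_{|u|\le T}f_\nu(u)\,du\Big)^{1/2}e^{T|\Imag z|}\le C_\nu\,e^{T|\Imag z|},\qquad z\in\C,
\]
so $g$ has exponential type $\le T$ and is bounded by $C_\nu$ on $\R$. Bernstein's inequality for functions of exponential type then gives $\|g^{(n)}\|_{\infty,\R}\le C_\nu\,T^n$ for all $n\ge0$. Taking $P$ to be the Taylor polynomial of $g$ of degree $N-1$ about $0$, with $N:=\lceil e^2T\rceil$, the Lagrange remainder on $[0,1]$ satisfies $\|g-P\|_{\infty,[0,1]}\le C_\nu\,T^N/N!\le C_\nu\,e^{-N}\le\eps$ for $\eps$ small. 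The decisive point is that $N\approx T\approx|\log\eps|^{1/\nu}$, which — because $\nu\le1$ — exceeds $|\log\eps|$; if $\nu>1$ one would instead have $T\prec|\log\eps|$, truncating the Taylor expansion at degree $\approx T$ would leave an error larger than $\eps$, and the argument would fail. It follows that $\|P\|_{\infty,[0,1]}\le C_\nu+1$ and $\|h-P\|_{\infty,[0,1]}\le2\eps$.

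Finally one bounds the $\eps$-entropy of the class $\mathcal P_N$ of polynomials of degree $<N$ (with complex coefficients) and uniform norm $\le C_\nu+1$ on $[0,1]$. Writing $P=\sum_{k<N}b_k\,\tau_k$ in the Chebyshev basis $\{\tau_k\}$ transplanted to $[0,1]$, for which $\|\tau_k\|_{\infty,[0,1]}=1$, one has $|b_k|\le 2(C_\nu+1)$; replacing each $b_k$ by a point of a grid of mesh $\eps/N$ alters $P$ by at most $\eps$ in the sup-norm, so $\mathcal P_N$ admits an $\eps$-net of cardinality at most $(C_\nu N/\eps)^{N}$. Therefore
\[
H(\mathcal P_N,\eps)\le N\log\!\big(C_\nu N/\eps\big)\preceq|\log\eps|^{1/\nu}\big(|\log\eps|+\log|\log\eps|\big)\approx|\log\eps|^{1+1/\nu}.
\]
Combining the three steps, $\H_1$ has an $O(\eps)$-net of cardinality $\exp\!\big(O(|\log\eps|^{1+1/\nu})\big)$, whence $H(\H_1,\eps)\preceq|\log\eps|^{1+1/\nu}$. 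The main obstacle — essentially bookkeeping of two scales — is to choose $T$ large enough to absorb the spectral tail yet small enough that a polynomial of degree only $\approx T$ recaptures the band-limited part to within $\eps$; these requirements are compatible precisely when $\nu\le1$.
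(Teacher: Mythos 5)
Your argument is correct, and it takes a genuinely different route from the paper's.  Both proofs start identically: truncate the spectral measure at a cutoff $T\approx|\log\eps|^{1/\nu}$ and absorb the spectral tail into an $\eps$-error in the uniform norm, using Cauchy--Schwarz against $F_\nu$.  Where you diverge is in estimating the entropy of the band-limited remainder.  The paper invokes an auxiliary lemma (quoted from Kolmogorov--Tikhomirov in the quantitative form of van der Vaart--van Zanten, Lemma~2.3), which gives $H(\H_1,\eps)\le C|\log\eps|^2/\delta$ whenever $\int e^{\delta|u|}\,F(du)$ is finite, and then tunes $\delta\approx|\log\eps|^{1-1/\nu}$.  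You instead argue from scratch: the band-limited part $g$ is entire of exponential type $\le T$ with $\|g\|_{\infty,\R}\le C_\nu$, so Bernstein's inequality controls $\|g^{(n)}\|_\infty\le C_\nu T^n$, a degree-$N\approx T$ Taylor polynomial reproduces $g$ on $[0,1]$ to within $\eps$ because $T\gtrsim|\log\eps|$ for $\nu\le1$, and the $\eps$-entropy of bounded degree-$N$ polynomials on $[0,1]$ is $O(N\log(N/\eps))\approx|\log\eps|^{1+1/\nu}$ by a Chebyshev-coefficient discretisation.  Your version is self-contained, makes the role of the constraint $\nu\le1$ very transparent (it is exactly what makes $T\ge c|\log\eps|$, which is needed for the factorial to beat $T^N$), and is arguably more illuminating; the paper's is shorter but leans on an external RKHS entropy lemma.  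The only cosmetic point worth noting: your $\eps$-net for $\mathcal P_N$ consists of polynomials, not necessarily elements of $\H_1$, so one should pass to a $2\eps$-net inside $\H_1$ (or simply accept the standard factor-of-two in the radius); this changes nothing in the $\preceq$-asymptotics.
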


\begin{proof} The idea is simple: for $\nu=1$ the result is already known from (\ref{vzbound}) and we
reduce the general case to that one by truncation of the spectral measure. Namely, for any
$\eps>0$ let $v=(3|\log\eps|)^{1/\nu}$. Then by (\ref{e1}) the elements of $\H_1$ have the form
\[
h(t)= \left( \int_{|u|\leq v} +  \int_{|u| > v}  \right) \ell(u) e^{-itu} F(du)
:= h_v(t)+h^v(t), \qquad ||\ell||_{2,F}\leq 1.
\]
By the choice of $v$ we have
\[
|h^v(t)|\leq ||\ell||_{2,F} \left(\int_{|u| > v} \exp(-|u|^\nu) du\right)^{1/2}
\leq C \exp (-|v|^\nu/2) v^{(1-\nu)/2} \leq \eps
\]
for small $\eps$. Therefore, we only need to study the entropy of the set $\H_1^v:=\{h_v, h\in \H_1\}$.
This will be done by means of the following result from \cite{KT} in the quantitative version
of \cite{VZ}, Lemma 2.3.

\begin{lemma}
Let $F$ be a spectral measure and let a positive $\delta<1$ be such that
\[
I:= \int e^{\delta|u|} F(du)\leq 1.
\]
Then
\[
H(\H_1,\eps)\leq C\ \frac{|\log\eps|^2}{\delta}\ ,
\]
where $C$ is a numeric constant.
\end{lemma}
First, notice that if we drop the assumption $I\leq 1$, then by scaling reasons we still
have
\be \label{vzbound2}
H(\H_1,\eps)
\leq C \ \frac{|\log(\eps/\sqrt{I})|^2}{\delta}\ ,
\ee

Apply this bound to our truncated measure $e^{-|u|^\nu}\unit_{|u|\leq v} du$ and
$\delta=\theta |\log\eps|^{1-1/\nu}$ with appropriately small parameter $\theta\leq 3^{-1/\nu}$.
Notice that $\delta|u|\leq |u|^{\nu}$ whenever $|u|\leq v$. Hence
\[
I= \int_{|u|\leq v} e^{\delta|u|-|u|^{\nu}}du \leq 2v \approx |\log\eps|^{1/\nu}.
\]
We obtain from (\ref{vzbound2})
\[
H(\H_1^v,\eps)\preceq \frac{|\log\eps|^2}{|\log\eps|^{1-1/\nu}} =  |\log\eps|^{1+1/\nu},
\]
as required.
\end{proof}

\subsection{Discrete spectrum}
Let now $F(du)=\sum_{k=-\infty}^\infty   \exp\{-|k|^\nu\}\delta_{2\pi k}$,  $\nu>0$.  We prove the following.

\begin{prop} \label{entr_discr} For any $\nu>0$ it is true that
\[
    H(\H_1,\eps ) \preceq  |\log\eps|^{1+1/\nu}.
\]
\end{prop}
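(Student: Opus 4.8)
The plan is to reduce the discrete case to the continuous one by truncating the discrete spectral measure, exactly mirroring the strategy of Proposition~\ref{entr_cont1}. First I would fix $\eps>0$ and choose a truncation level $N=N(\eps)$, keeping only the atoms at $2\pi k$ with $|k|\le N$. Writing $h\in\H_1$ via the representation (\ref{e1}) (with the integral now a sum over the atoms), I split $h=h_N+h^N$ where $h_N$ uses the frequencies $|k|\le N$ and $h^N$ the tail. The tail estimate is straightforward: since $\|\ell\|_{2,F}\le 1$, Cauchy--Schwarz gives $|h^N(t)|\le\big(\sum_{|k|>N}e^{-|k|^\nu}\big)^{1/2}$, and choosing $N\approx(c|\log\eps|)^{1/\nu}$ for a suitable constant makes this $\le\eps$ for small $\eps$. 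Hence it suffices to bound the entropy of the finite-dimensional set $\H_1^N:=\{h_N:h\in\H_1\}$.

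For $\H_1^N$ I would invoke the scaled Kolmogorov--Tikhomirov-type bound (\ref{vzbound2}): with the truncated measure $\tilde F_\nu^N(du)=\sum_{|k|\le N}e^{-|k|^\nu}\delta_{2\pi k}$ one picks $\delta=\delta(\eps)>0$ with $\delta\cdot 2\pi|k|\le|k|^\nu$ for all $|k|\le N$, i.e.\ $\delta\approx N^{\nu-1}\approx|\log\eps|^{1-1/\nu}$ (up to constants; for $\nu<1$ this tends to $0$, for $\nu\ge1$ one simply takes $\delta$ a small constant). Then $I=\int e^{\delta|u|}\,\tilde F_\nu^N(du)=\sum_{|k|\le N}e^{\delta\cdot 2\pi|k|-|k|^\nu}\le 2N+1\approx|\log\eps|^{1/\nu}$, so $\log\sqrt I\approx\log|\log\eps|$ is negligible against $|\log\eps|$, and (\ref{vzbound2}) yields
\[
H(\H_1^N,\eps)\preceq\frac{|\log\eps|^2}{\delta}\approx\frac{|\log\eps|^2}{|\log\eps|^{1-1/\nu}}=|\log\eps|^{1+1/\nu}.
\]
Combining the tail bound with this (a covering of $\H_1^N$ at radius $\eps$ gives a covering of $\H_1$ at radius $2\eps$, which does not affect the asymptotic order) gives $H(\H_1,\eps)\preceq|\log\eps|^{1+1/\nu}$.

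The one genuine point requiring care — and the main obstacle — is that the cited Kolmogorov--Tikhomirov/van~der~Vaart--van~Zanten bound was stated for spectral measures on the real line with density-type assumptions, whereas $\tilde F_\nu^N$ is purely atomic; I would need to check that the proof of that lemma (or of its scaled version (\ref{vzbound2})) goes through verbatim for arbitrary finite spectral measures with $\int e^{\delta|u|}F(du)<\infty$, since it really only uses the RKHS representation (\ref{e1}) and the exponential-moment bound, not absolute continuity. Once that is granted, the rest is the routine bookkeeping of constants in the choice of $\delta$ and $N$, noting that for $\nu\ge1$ the exponent $1-1/\nu\ge0$ and a constant (or vanishing-but-harmless) $\delta$ still delivers the claimed $|\log\eps|^{1+1/\nu}$ rate.
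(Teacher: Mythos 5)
Your plan diverges from the paper's proof, which does not truncate the spectral measure at all: the paper observes that every $h\in\H_1$ extends to an entire function that is \emph{periodic} (period $1$, since the spectrum sits on $2\pi\Z$) and satisfies $|h(z)|\le C_1\exp\{C_2|\Imag(z)|^{\nu/(\nu-1)}\}$, and then invokes Theorem~XXI of Kolmogorov--Tikhomirov, which gives the entropy $\approx|\log\eps|^{1+1/\nu}$ for exactly this class of periodic analytic functions. The periodicity is the structural feature that makes the discrete case smaller than the continuous one; your truncation scheme discards it.

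The concern you flag — whether the van~der~Vaart--van~Zanten lemma applies to atomic measures — is in fact a non-issue: the paper states Lemma~3.3 for an arbitrary spectral measure $F$, and its proof only uses the RKHS representation (\ref{e1}) together with the exponential-moment bound, so it covers $\tilde F_\nu^N$ verbatim.

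The genuine gap is elsewhere, in the range $\nu>1$. You correctly compute that one wants $\delta\approx N^{\nu-1}\approx|\log\eps|^{1-1/\nu}$, but for $\nu>1$ this exponent is positive, so $\delta\to\infty$, whereas Lemma~3.3 explicitly requires $\delta<1$. Your fallback — ``a constant $\delta$ still delivers the claimed rate'' — is false: a bounded $\delta$ gives only $H\preceq|\log\eps|^2/\delta\approx|\log\eps|^2$, which is strictly weaker than $|\log\eps|^{1+1/\nu}$ for every $\nu>1$. So as written the argument proves the proposition only for $\nu\le1$. (A truncation proof \emph{can} be rescued for $\nu>1$, but one must replace Lemma~3.3 by a finite-dimensional volume bound: $\H_1^N$ is a bounded subset of a $(2N+1)$-dimensional space, so $H(\H_1^N,\eps)\preceq(2N+1)\log(1/\eps)\approx|\log\eps|^{1/\nu}\,|\log\eps|=|\log\eps|^{1+1/\nu}$. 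That is not what you proposed.) Either fix this, or follow the paper and appeal directly to the periodic Kolmogorov--Tikhomirov estimate.
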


\begin{proof}
The reasoning goes along the same lines as that of the upper bound in the previous proposition.
Let $h \in \H_1$. Then the representation (\ref{e1}) means that
\be \label{e1_discr}
    h(t)= \sum_{k=-\infty}^{\infty} \ell_k e^{-ik t-|k|^\nu}
\ee
with some $\ell=(\ell_k)$
such that
\[
    ||\ell||_{L_2(\R,F)}^2 = \sum_{k} |\ell_k|^2   \exp\{-k^\nu\} \leq 1.
\]
Clearly, $h$ turns out to be a periodic entire analytic function well defined on $\C$ by the same
expression (\ref{e1_discr}) and by the H\"older's inequality
\[
     |h(z)| \leq \sum_{k=-\infty}^{\infty} e^{|\Imag(z)|\, |k|- |k|^{\nu}} |\ell_k|
            \leq \left( \sum_{k=-\infty}^{\infty} e^{2|\Imag(z)|\, |k|- |k|^{\nu}} \right)^{1/2}
            := \tilde M_\nu(2|\Imag(z)|).
\]
It follows again that
\[
   |h(z)|  \leq C_1\ \exp \{ C_2 |\Imag(z)|^{\nu/(\nu-1)}  \},
   \qquad \forall h\in H_1, z\in\C,
\]
with appropriate constants $C_1=C_1(\nu)$, $C_2=C_2(\nu)$. It is known by Theorem XXI
in \cite{KT} that the entropy of the
class of all periodic entire analytic functions  $\tilde\A(C_1,C_2,\nu)$ satisfying
this condition verifies
\[
 H(\tilde\A(C_1,C_2,\nu),\eps ) \approx |\log\eps|^{1+1/\nu}.
\]
Hence
\[
   H(\H_1,\eps ) \preceq  |\log\eps|^{1+1/\nu}.
\]
\end{proof}

\section{Proofs of main results}
\setcounter{equation}{0}

\begin{proof} [\ of Theorem \ref {t2}]
The lower bound for small deviations follows immediately from Proposition \ref{entr_discr} and
Corollary \ref{cor:l2}.

For getting the upper bound we implement a simple but ingenious idea of B.S.~Tsirelson
initially designed for continuous spectra in \cite{LifTs}.
Let $l$ be an integer.
Let us consider an auxiliary centered stationary Gaussian process
$Y=Y_l(t)$ with the spectral measure
\[
 F_{Y}(du)=\exp\{-l^\nu\}\ \sum_{|k|\leq l} \delta_{2\pi k},
 \]
which minorates $\tilde F_\nu$. By the standard Anderson argument
\[
\P(||\dxn||_\infty\leq r) \leq
\P(||Y||_\infty\leq r) \qquad \forall r>0.
\]
The covariance of $Y$ is
\be \label{cov}
\E Y(t)Y(0)= \exp\{-l^\nu\}\ \sum_{|k|\leq l}  e^{itk}  = \frac{4 \exp\{-l^\nu\}}{|e^{it}-1|^2}
\, \sin\left(\frac{(2l+1)t}{2}\right)\, \sin\left(\frac{t}{2}\right), \qquad t\not=2\pi k,
\ee
while for the variance we have
\be \label{var}
    \s^2:= \E|Y(t)|^2 = \exp\{-l^\nu\}\ (2l+1).
\ee
 Define a grid step $\D={2\pi}/(2l+1)$. Observe from (\ref{cov})
that $(Y(k\D))_{k\in \Z}$ is a centered Gaussian non-correlated, hence independent, sequence with variance
(\ref{var}). For any $r>0$ we get the bound
\begin{eqnarray*}
\P(||Y||_\infty\leq r) &\leq& \P(\sup_{0\leq k\leq 1/\D} |Y(k\D)|\leq r)\cr
 &\leq& \P(\s|N|\leq r)^{1/\D}
 \leq \left(\sqrt{2/\pi}\ \frac {r}{\s}\right)^{1/\D}
\leq \left(\frac {r}{\s}\right)^{(2l+1)/2\pi}
\cr
&=& \left(\frac {r}{(2l+1) \exp\{-l^\nu\}}\right)^{(2l+1)/2\pi}
\leq \left(r \exp\{l^\nu\}   \right)^{(2l+1)/2\pi}.
\end{eqnarray*}
Next, an elementary optimization suggests to set
\[
l \sim \left(\frac{|\log r|}{\nu+1}\right)^{1/\nu},
\]
whereas
\begin{eqnarray*}
\ph(\dxn,r) &\geq& \ph(Y,r) \geq
- \frac{2l+1}{2\pi}\ \log \left(r \exp\{l^\nu\}\right) \sim
   \frac{l}{\pi}\ (|\log r| - l^\nu)
\\
&\sim&
 \frac{\nu l}{\pi(\nu+1)}\ |\log r|
=  \frac{\nu}{\pi(\nu+1)}\ \frac{|\log r|^{1+ 1/\nu}} {(\nu+1)^{1/\nu}}
\cr
&=&    \frac{\nu}{\pi(\nu+1)^{1+1/\nu}}\ |\log r|^{1+1/\nu},
\end{eqnarray*}
and we arrive at the desired estimate.
\end{proof}


\begin{proof} [\ of Theorem \ref {t1}]

For $\nu >1$
the result follows immediately from Proposition \ref{entr_cont} and Corollary \ref{cor_ph_eq_H}.

For $\nu\leq 1$ the necessary upper bound  follows immediately from
Proposition \ref{entr_cont1} and Corollary \ref{cor:l2}.

The necessary lower bound
\be \label{tsir_cont}
  \ph(\xn,r) \succeq |\log r|^{1+1/\nu}
\ee
holds for any $\nu>0$ and can be obtained by Tsirelson's method, as described above. In this case,
for any positive  $l$ we consider an auxiliary  centered stationary Gaussian process
$Y=Y_l(t)$ with the spectral density
\[
    f_{Y}(u)=\exp\{-l^\nu\}\ \unit_{|u|\leq l},
\]
which minorates $f_\nu$.  Define a grid step $\D=\frac{2\pi}{l}$. It is easy to see again
that $(Y(k\D))_{k\in \Z}$ is a centered Gaussian non-correlated, hence independent, sequence with variance
\[
    \s^2:= \E|Y(t)|^2 = 2 l \exp\{-l^\nu\}.
\]
and the final calculation leading to (\ref{tsir_cont}) goes through exactly as above.
\end{proof}

\begin{rem} {\rm
We see from Theorem \ref{t1} that the estimate (\ref{tsir_cont}) is not sharp for $\nu>1$. This is
rather surprising since in the previously known examples (e.g.\ for polynomially
decreasing spectral densities in \cite{LifTs}) Tsirelson's method always returned the right rates.
}
\end{rem}

\begin{proof} [\ of Theorem \ref {scaling}]

We prove (\ref{eq:s2}), the proof of (\ref{eq:s1}) is identical.
Clearly,
\[
\P(\|X^c_\nu\|_\infty \leq r) = \P\Big(\sup_{t \in [0,1/c]}|X_\nu(t)| \leq r\Big).
\]
Let $\H^c_1$ be the unit ball of the RKHS of the process $X_\nu$ viewed as random element
in $C[0,1/c]$, i.e.\  the class of functions on $[0, 1/c]$
of the form
\[
h(t) = \int_{-\infty}^{\infty} \ell(u)e^{-itu}\,dF_\nu(u), \qquad \|\ell\|_{2,F_\nu} \leq 1.
\]
Let $n$ be the smallest integer larger or equal to $1/c$. Observe that if $h \in \H^c_1$, then for $k = 0, \ldots, n-1$,
the function $t\mapsto h(k+t)$ on $[0,1]$ belongs to the unit ball ${\H}_1$ of the RKHS of the process
$X_\nu$ on $[0,1]$. Hence, if $h_1, \ldots, h_N$ is
an $\eps$-net for $\H_1$, then the functions of the form
\[
     t \mapsto \sum_{k=0}^{n-1} h_{j_k}(t-k) \unit_{[k, k+1)}(t)
\]
form an $\eps$-net for $\H^c_1$. There are at most $N^n$ such functions. We keep only those  for which 
there exists an element of $\H^c_1$ at
uniform distance at most $\eps$. The mentioned elements form a $2\eps$-net for $\H^c_1$.  
It follows that
\[
H(\H^c_1, 2\eps) \leq n H({\H}_{1}, \eps) \leq \frac 2 c \ H({\H}_{1}, \eps).
\]
Now apply Proposition \ref{entr_discr} and Corollary \ref{cor:l2} to arrive at (\ref{eq:s2}).
\end{proof}

\section{An open problem}
\setcounter{equation}{0}

It would be very interesting to extend our results to more general classes of smooth processes. Since
Tsirelson's bound is sharp for spectral measures $F_\nu$, $0<\nu\le 1$, and in the case of polynomial 
spectral density $f(u)\approx |u|^{-1-\beta}$ it is also known to give a sharp bound
$\ph(X,r) \approx r^{-2/\beta}$ , 
it is natural to conjecture that this bound is sharp in all intermediate cases, too. Our methods provide some 
reasonable bounds for general case but they should be at least enhanced in order to solve it properly. 
For example, on the test family of intermediate processes $Y_\alpha$  with spectral densities
\[ 
   f_\alpha(u) = \exp\{- (\log_+|u|)^\alpha\} ,\qquad \alpha>1,
\]
we get 
\[
|\log r|^{\frac{\alpha-1}{\alpha}}\ \exp\left\{ (2|\log r|)^{1/\alpha} \right\}
\preceq
\ph(Y_\alpha,r) 
\preceq
|\log r|\ \exp\left\{ (2|\log r|)^{1/\alpha}  + \frac{5}{\alpha}|\log r|^{2/\alpha-1}  \right\},
\]
which is not as sharp as we would like.
{\footnotesize

}
\end{document}